\newtheorem{theorem}{Theorem}[section]
\newtheorem{lemma}[theorem]{Lemma}
\theoremstyle{definition}
\newtheorem{definition}[theorem]{Definition}
\newtheorem{example}[theorem]{Example}
\newtheorem{proposition}[theorem]{Proposition}
\newtheorem{corollary}[theorem]{Corollary}
\newtheorem{remark}[theorem]{Remark}
\newtheorem{conjecture}[theorem]{Conjecture}
\theoremstyle{remark}
\newcommand{\be}{\begin{equation}}
\newcommand{\ee}{\end{equation}}
\numberwithin{equation}{section}
\begin{document}

\title{The complex genera, symmetric functions and multiple zeta values}

\author{Ping Li}
\address{School of Mathematical Sciences, Fudan University, Shanghai 200433, China}

\email{pinglimath@fudan.edu.cn\\pinglimath@gmail.com}

 \subjclass[2010]{57R20, 05E05, 05A18, 11M32, 53C26}


\keywords{The complex genera, $\text{Td}^{\frac{1}{2}}$-genus, $\Gamma$-genus, Todd-genus, Chern number, symmetric function, multiple zeta value, multiple star zeta value, Calabi-Yau manifold, hyper-K\"{a}hler manifold}

\begin{abstract}
We examine the coefficients in front of Chern numbers for complex genera, and pay special attention to the $\text{Td}^{\frac{1}{2}}$-genus, the $\Gamma$-genus as well as the Todd genus. Some related geometric applications to hyper-K\"{a}hler and Calabi-Yau manifolds are discussed. Along this line and building on the work of Doubilet in 1970s, various Hoffman-type formulas for multiple-(star) zeta values and transition matrices among canonical bases of the ring of symmetric functions can be uniformly treated in a more general framework.
\end{abstract}

\maketitle


\section{Introduction}\label{introduction}
The notion of oriented genus was introduced by Hirzebruch (\cite{Hi}), which is a ring homomorphism from the rational oriented cobordism ring to $\mathbb{Q}$. Building on Thom's pioneer work (\cite{Th}), it turns out that oriented genera correspond one-to-one to monic formal power series and are rationally linear combination of Pontrjagin numbers. Two typical examples are Hirzebruch's $L$-genus and the $\hat{A}$-genus.
The Hirzebruch signature theorem implies that $L(\cdot)$ is the signature of the intersection pairing on $H^{2k}(M;\mathbb{R})$ and hence an integer (\cite[\S 8]{Hi}). The integrality of $\hat{A}(\cdot)$ for \emph{spin} manifolds was observed by Borel and Hirzebruch (\cite[\S 25]{BH}). This fact both motivated and was later explained by the Atiyah--Singer index theorem, which showed that the $\hat{A}$-genus of a spin manifold is the index of its Dirac operator (\cite[\S 5]{AS}). Since then various geometric and topological facets of the $L$-genus and $\hat{A}$-genus have been extensively investigated. Most notably, the former is related to the question of the homotopy invariance of the higher signatures, known as the Novikov conjecture, and the latter is deeply related to the existence of positive scalar curvature metrics on spin manifolds (\cite{Li},\cite{Hit},\cite{GL},\cite{Sto}).

In contrast to these, the \emph{arithmetic} properties of these two genera had not yet been fully studied, except the obvious connection with Bernoulli numbers (\cite[p.281]{MS}). In a recent work \cite{BB18}, Berglund and Bergstr\"{o}m showed that the coefficients in front of the Pontrjagin numbers of the $\hat{A}$-genus and $L$-genus can be expressed in terms of multiple-star zeta values and an alternating version respectively (\cite[Thms 1 and 4]{BB18}). In particular, all these coefficients are nonzero and their signs can be explicitly determined. Their main idea is to first express these coefficients in terms of values of the usual Riemann zeta function on even integers,
and then apply Hoffman-type formulas (\cite{Ho92}) to arrive at the desired results. Note that in this process some quite deep results in algebraic combinatorics are needed.

Inspired by the work \cite{BB18} and some ideas therein, \emph{the main purpose} of this article is to study the coefficients in front of Chern numbers for complex genera and then focus on three cases: the $\text{Td}^{\frac{1}{2}}$-genus, the $\Gamma$-genus as well as the Todd genus. In order to state some results in this article, let us introduce more notation.

An \emph{integer partition} $\lambda=(\lambda_1,\lambda_2,\ldots,\lambda_l)$ is a finite sequence of positive integers in non-increasing order: $\lambda_1\geq\lambda_2\geq\cdots\geq\lambda_l\geq 1$. Denote by $l(\lambda):=l$ and $|\lambda|:=\sum_{i=1}^l\lambda_i$ and they are called respectively the \emph{length} and \emph{weight} of the partition $\lambda$. These $\lambda_i$ are called \emph{parts} of $\lambda$. We may write $\lambda=(\lambda_1,\lambda_2,\ldots,\lambda_{l(\lambda)})$. It is also convenient to use another notation which indicates the number of times each integer appears: $\lambda=(1^{m_1(\lambda)}2^{m_2(\lambda)}\ldots)$. This means that $i$ appears with multiplicity $m_i(\lambda)$ among the parts $\lambda_i$. We use the notation $2\lambda$ to denote the partition $(2\lambda_1,2\lambda_2,\ldots,2\lambda_{l(\lambda)})$.

Recall that complex genera are defined via the complex cobordism ring, correspond one-to-one to monic formal power series and are rationally linear combination of Chern numbers (\cite{Mi}).
Let $\varphi$ be a complex genus whose associated power series is $Q(x)$. Denote by $e_i$ the $i$-th elementary symmetric function in the countably many (commuting) variables $x_1,x_2,\ldots$ (more details can be found in Section \ref{symmetric function}), and
\be\label{formal decomposition1}1+\sum_{i=1}^{\infty}Q_i(e_1,\cdots,e_i):=\prod_{i=1}^{\infty}Q(x_i),\ee
where $Q_i(e_1,\cdots,e_i)$ denotes the homogeneous part of degree $i$ in $\prod_{i=1}^{\infty}Q(x_i)$ \big($\text{deg}(x_i)=1$\big). Let $M$ be a compact, almost-complex manifold of real dimension $2n$ with Chern classes $c_i$. Then the value $\varphi(M)$ is given by
\be\label{formal decomposition2}\varphi(M)=\int_MQ_n(c_1,\cdots,c_n)=:\sum_{|\lambda|= n}b_{\lambda}(\varphi)C_{\lambda}[M],\ee
where $C_{\lambda}[M]$ is the Chern number associated to the partition $\lambda$ and $b_{\lambda}(\varphi)$ the coefficient in front of it.

For real numbers $t_1,\ldots,t_r>1$, define the two series
\be\label{MZV}\zeta(t_1,\ldots,t_r):=
\sum_{n_1>n_2>\cdots>n_r\geq 1}
\frac{1}{n_1^{t_1}n_2^{t_2}\cdots n_r^{t_r}}\ee
and
\be\label{MSZV}\zeta^{\star}(t_1,\ldots,t_r):=\sum_{n_1\geq n_2\geq\cdots\geq n_r\geq 1}\frac{1}{n_1^{t_1}n_2^{t_2}\cdots n_r^{t_r}},\ee
which were introduced by Hoffman (\cite{Ho92}) and Zagier (\cite{Za94}) independently and can be viewed as multiple versions of the classical Riemann zeta function
\be\label{R-Z functions}\zeta(t):=\sum_{n=1}^{\infty}\frac{1}{n^t},\qquad t>1.\ee
When these $t_1,\ldots,t_r\in\mathbb{Z}_{>0}$, (\ref{MZV}) and (\ref{MSZV}) are called \emph{multiple zeta values} (MZV for short) and \emph{multiple-star zeta values} (MSZV for short) respectively. We refer to \cite{Zh} for a thorough treatment on various algebraic facets of MZV and MSZV.

The symmetrization of $\zeta(t_1,\ldots,t_r)$ and $\zeta^{\star}(t_1,\ldots,t_r)$ are defined respectively by
\be\label{symmetrization of MZV}\zeta_S(t_1,\ldots,t_r):=\sum_{\sigma\in S_{r}}\zeta(t_{\sigma(1)},\ldots,t_{\sigma(r)})\ee
and
\be\label{symmetrization of MSZV}\zeta_S^{\star}(t_1,\ldots,t_r):=\sum_{\sigma\in S_{r}}\zeta^{\star}(t_{\sigma(1)},\ldots,t_{\sigma(r)}),\ee
where $S_r$ is the permutation group on $\{1,\ldots,r\}$.

The \emph{$\text{Td}^{\frac{1}{2}}$-genus} is the complex genus whose formal power series is $$Q(x)=(\frac{x}{1-e^{-x}})^{\frac{1}{2}},$$
the square root of the usual Todd genus.
Our first main result is the following
\begin{theorem}[=Corollary \ref{coro td1-2}]\label{td1-2}
The coefficients $b_{2\lambda}(\text{Td}^{\frac{1}{2}})$ of the Chern numbers $C_{2\lambda}(\cdot)$ in the $\text{Td}^{\frac{1}{2}}$-genus are given by
$$b_{2\lambda}(\text{Td}^{\frac{1}{2}})=
\frac{(-1)^{|\lambda|-l(\lambda)}}{(2\pi)^{2|\lambda|}\prod_im_i(\lambda)!}
\cdot\zeta^{\star}_S(2\lambda_{1},\ldots,2\lambda_{l(\lambda)})\qquad\big(0!:=1\big).$$
In particular, the coefficient $b_{2\lambda}(\text{Td}^{\frac{1}{2}})$ is nonzero for every partition $\lambda$. It is positive if $|\lambda|-l(\lambda)$ is even and negative if $|\lambda|-l(\lambda)$ is odd.
\end{theorem}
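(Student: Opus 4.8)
The plan is to pass through the power-sum basis and reduce everything to a single combinatorial identity of Hoffman type. First I would compute the logarithm of the defining series. Writing $\log\frac{x}{1-e^{-x}}=\frac{x}{2}-\sum_{k\ge 1}\frac{B_{2k}}{2k\,(2k)!}x^{2k}$ (which follows from $\frac{1}{e^x-1}=\frac1x-\frac12+\sum_{k\ge1}\frac{B_{2k}}{(2k)!}x^{2k-1}$ by integrating and substituting $x\mapsto -x$), the $\text{Td}^{\frac{1}{2}}$-series satisfies $\log Q(x)=\frac12\log\frac{x}{1-e^{-x}}=\sum_{k\ge1}a_kx^k$ with $a_1=\tfrac14$, all higher odd $a_k=0$, and $a_{2k}=-\frac{B_{2k}}{4k\,(2k)!}$. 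The classical evaluation $\zeta(2k)=\frac{(-1)^{k-1}(2\pi)^{2k}B_{2k}}{2(2k)!}$ then converts this into the arithmetically meaningful form $a_{2k}=\frac{(-1)^{k}\zeta(2k)}{2k\,(2\pi)^{2k}}$, which is where the Riemann zeta values enter.

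Next, since $\varphi$ is multiplicative, $\prod_iQ(x_i)=\exp\big(\sum_ka_kp_k\big)$ for the power-sum symmetric functions $p_k=\sum_i x_i^k$, so in the power-sum basis the coefficient of $p_\mu$ is $\prod_k a_k^{m_k(\mu)}/m_k(\mu)!$. To reach the $b_{2\lambda}$, which are the coefficients of the Chern monomials $e_{2\lambda}=\prod_i e_{2\lambda_i}$, I would expand each $p_\mu$ back into the elementary basis using the closed form of Newton's identities, $[e_\nu]\,p_m=(-1)^{m-l(\nu)}\frac{m\,(l(\nu)-1)!}{\prod_t m_t(\nu)!}$. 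A parity observation simplifies matters immediately: since every part of $2\lambda$ is even, a power sum $p_\mu$ with any odd part cannot contribute to $e_{2\lambda}$ (an even subscript sum cannot be assembled from a part of odd size), so $a_1$ and all odd coefficients drop out and only the $a_{2k}$ survive. Organising the surviving contributions by which parts of $2\lambda$ are grouped together produces a sum over set partitions $\pi$ of $\{1,\dots,l(\lambda)\}$: each block $B$ merges its parts into a single power sum $p_{2\lambda(B)}$, contributing the factor $a_{2\lambda(B)}\cdot[e_{2\lambda|_B}]\,p_{2\lambda(B)}$ (times further symmetry factors coming from the $1/\prod_k m_k(\mu)!$ in the power-sum coefficients), where $\lambda(B):=\sum_{i\in B}\lambda_i$.

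Collecting the scalar factors confirms the shape of the answer. The prefactor $(2\pi)^{-2\lambda(B)}$ of each $a_{2\lambda(B)}$ multiplies to $(2\pi)^{-2|\lambda|}$; the sign $(-1)^{\lambda(B)}$ of $a_{2\lambda(B)}$ times the sign $(-1)^{2\lambda(B)-|B|}=(-1)^{|B|}$ of the Newton coefficient multiplies to $(-1)^{|\lambda|+l(\lambda)}=(-1)^{|\lambda|-l(\lambda)}$; and the factor $\tfrac1{2\lambda(B)}$ from $a_{2\lambda(B)}$ cancels the factor $2\lambda(B)$ from the Newton coefficient, leaving each block weighted by $\zeta(2\lambda(B))$ times a purely combinatorial rational number. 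After factoring out $\frac{(-1)^{|\lambda|-l(\lambda)}}{(2\pi)^{2|\lambda|}}$, what remains is a positive set-partition sum of products $\prod_{B\in\pi}\zeta(2\lambda(B))$. The final step is to recognise this as the expansion of $\frac{1}{\prod_i m_i(\lambda)!}\zeta^{\ast}_S(2\lambda_1,\dots,2\lambda_{l(\lambda)})$ through a Hoffman-type formula --- concretely the stuffle relation that writes the symmetrised multiple-star zeta value as a set-partition sum of products of ordinary zeta values (for $l=2$ this is simply $\zeta^{\ast}_S(a,b)=\zeta(a)\zeta(b)+\zeta(a+b)$). I would obtain this by specialising the general transition/Hoffman-type identity developed earlier in the paper (building on Doubilet) to the arguments $2\lambda_i$.

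I expect this last identification to be the main obstacle: verifying that the combinatorial weights produced by the $p\to e$ transition agree, set partition by set partition, with those in the stuffle expansion of $\zeta^{\ast}_S$ is precisely the content of the Hoffman-type/Doubilet machinery, and it is the only place where nontrivial combinatorics is needed. Once the formula is in hand the remaining assertions are immediate: $\zeta^{\ast}_S(2\lambda_1,\dots,2\lambda_{l(\lambda)})$ is a sum of strictly positive terms, hence positive, and the prefactor $\big((2\pi)^{2|\lambda|}\prod_i m_i(\lambda)!\big)^{-1}$ is positive, so $b_{2\lambda}(\text{Td}^{\frac{1}{2}})$ is nonzero with sign exactly $(-1)^{|\lambda|-l(\lambda)}$.
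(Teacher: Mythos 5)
Your proposal is correct, and it arrives at the same combinatorial endpoint as the paper --- a sum over set partitions $\pi\in\Pi_{l(\lambda)}$ with block weights $(|\pi_i|-1)!\,\zeta(2\lambda_{\pi_i})$, which Hoffman's formula \eqref{Hoffman formula MSZV} identifies with $\zeta_S^{\ast}(2\lambda_1,\ldots,2\lambda_{l(\lambda)})$ --- but by a genuinely different intermediate route. The paper identifies $b_{\lambda}$ with $m_{\lambda}(\mathbf{y})$ in an auxiliary alphabet via \eqref{formula m-e relation}, applies Doubilet's $m\to p$ M\"obius inversion to obtain the universal formula \eqref{formula for coefficient} in terms of the one-part coefficients $b_n$, and only then specializes $b_{2n}=(-1)^{n-1}\zeta(2n)/(2\pi)^{2n}$ from the logarithmic derivative \eqref{cauchy}. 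You instead stay in the manifold's own variables, writing $\prod_iQ(x_i)=\exp\big(\sum_ka_kp_k(\mathbf{x})\big)$ with $a_k=\frac{(-1)^{k-1}}{k}b_k$ and converting $p\to e$ with the closed form of Newton's identities. Your computation of the $a_k$, your parity argument eliminating every $\mu$ with an odd part, and your scalar bookkeeping (the sign $(-1)^{|\lambda|-l(\lambda)}$, the factor $(2\pi)^{-2|\lambda|}$, and the cancellation of $1/(2\lambda(B))$ against the Newton factor $2\lambda(B)$ --- which is exactly why the paper's $b_{2n}$, carrying no $1/2n$, enters \eqref{formula for coefficient} cleanly) are all correct. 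The one step you leave unexecuted, namely that the multiplicities $1/\prod_km_k(\mu)!$ from the exponential and $1/\prod_tm_t(\nu)!$ from Newton's formula reassemble, set partition by set partition, into the single prefactor $\prod_{B\in\pi}(|B|-1)!\big/\prod_im_i(\lambda)!$, is a genuine verification rather than a formality, but it is precisely the transition-matrix computation that Doubilet's theorem packages, so deferring to that machinery is legitimate. What the paper's route buys is that \eqref{formula for coefficient} is established once for all complex genera and then reused verbatim for the $\Gamma$- and Todd-genera; what yours buys is a more self-contained derivation that never introduces the auxiliary variables $\mathbf{y}$.
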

The $\text{Td}^{\frac{1}{2}}$-genus is of geometric importance in hyper-K\"{a}her geometry. An irreducible hyper-K\"{a}her manifold $M$ is a simply-connected compact K\"{a}hler manifold and $H^0(M,\Omega^2_M)$ is spanned by an everywhere non-degenerate holomorphic $2$-form $\tau$, whose complex dimension is necessarily even, say $2n$.  This definition is equivalent to the fact that, as a Riemannian manifold, its holonomy is equal to $\text{Sp}(n)$. Such a $\tau$ yields an isomorphism between the holomorphic tangent and cotangent bundles of $M$ and thus odd Chern classes are torsion elements. This means that only Chern numbers of the forms $C_{2\lambda}[M]$ ($|\lambda|=n$) are involved. We refer the reader to \cite{GHJ} for more details on irreducible hyper-K\"{a}hler manifolds. It is commonly believed that Chern numbers of irreducible hyper-K\"{a}hler manifolds should satisfy strong arithmetic constraints (see Section \ref{remark} for more details). We refer the reader to \cite[\S 4]{OSV22} and the references therein for some related open questions and comments. Hitchin and Sawon showed that $\text{Td}^{\frac{1}{2}}(M)>0$ (\cite{HS01}) and Jiang recently showed that $\text{Td}^{\frac{1}{2}}(M)<1$ when $n\geq 2$ (\cite[Cor.~5.1]{Ji23}). Applying Theorem \ref{td1-2}, we can reformulate Hitchin--Sawon and Jiang's results in the following version related to MSZV.
\begin{corollary}
Let $M$ be an irreducible hyper-K\"{a}hler manifold of complex dimension $2n\geq4$. Then we have
$$0<\sum_{\overset{l(\lambda)-n}{\text{are even}}}
\frac{\zeta^{\star}_S(2\lambda_{1},\ldots,2\lambda_{l(\lambda)})}{\prod_im_i(\lambda)!}
C_{2\lambda}[M]-\sum_{\overset{l(\lambda)-n}{\text{are odd}}}
\frac{\zeta^{\star}_S(2\lambda_{1},\ldots,2\lambda_{l(\lambda)})}{\prod_im_i(\lambda)!}
C_{2\lambda}[M]<(2\pi)^{2n}.$$
\end{corollary}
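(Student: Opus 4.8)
The plan is to recognize the middle expression of the claimed inequality as, up to one fixed positive constant, the $\text{Td}^{\frac{1}{2}}$-genus of $M$ itself, and then to invoke the known bounds of Hitchin--Sawon and Jiang. Since $M$ is an irreducible hyper-K\"{a}hler (hence almost-complex) manifold of complex dimension $2n$, the holomorphic symplectic form forces the odd Chern classes to be torsion, so in the evaluation (\ref{formal decomposition2}) only the Chern numbers of the form $C_{2\lambda}[M]$ with $|\lambda|=n$ survive. Hence
\[
\text{Td}^{\frac{1}{2}}(M)=\sum_{|\lambda|=n}b_{2\lambda}(\text{Td}^{\frac{1}{2}})\,C_{2\lambda}[M].
\]

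First I would substitute the closed formula of Theorem \ref{td1-2} for each coefficient $b_{2\lambda}(\text{Td}^{\frac{1}{2}})$. Because every contributing partition has weight $|\lambda|=n$, the denominator factor $(2\pi)^{2|\lambda|}$ is the single global constant $(2\pi)^{2n}$ and can be pulled outside the sum, yielding
\[
(2\pi)^{2n}\,\text{Td}^{\frac{1}{2}}(M)=\sum_{|\lambda|=n}(-1)^{\,n-l(\lambda)}\,\frac{\zeta^{\ast}_S(2\lambda_1,\ldots,2\lambda_{l(\lambda)})}{\prod_i m_i(\lambda)!}\,C_{2\lambda}[M].
\]
The key bookkeeping step is the sign: since $|\lambda|=n$, the factor $(-1)^{|\lambda|-l(\lambda)}=(-1)^{\,n-l(\lambda)}$ equals $+1$ precisely when $l(\lambda)-n$ is even and $-1$ precisely when $l(\lambda)-n$ is odd. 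Splitting the sum according to the parity of $l(\lambda)-n$ therefore turns the right-hand side into exactly the difference of the two sums appearing in the statement, so the middle expression of the inequality is identically equal to $(2\pi)^{2n}\,\text{Td}^{\frac{1}{2}}(M)$.

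It then remains only to bound $\text{Td}^{\frac{1}{2}}(M)$. Hitchin--Sawon's theorem gives $\text{Td}^{\frac{1}{2}}(M)>0$ (\cite{HS01}), and since the hypothesis $2n\geq 4$ means $n\geq 2$, Jiang's theorem gives $\text{Td}^{\frac{1}{2}}(M)<1$ (\cite{Ji20}). Multiplying the chain $0<\text{Td}^{\frac{1}{2}}(M)<1$ through by the positive number $(2\pi)^{2n}$ produces the asserted two-sided estimate. I expect no genuine obstacle here, as the result is a direct reformulation of Theorem \ref{td1-2} combined with two external inequalities; the only points demanding care are matching the parity convention of the sign $(-1)^{|\lambda|-l(\lambda)}$ with the index sets labelled ``$l(\lambda)-n$ even/odd'', and noting that the dimension hypothesis $n\geq 2$ is exactly what makes Jiang's strict upper bound applicable.
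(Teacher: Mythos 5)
Your proposal is correct and follows exactly the route the paper intends: identify the middle expression as $(2\pi)^{2n}\,\text{Td}^{\frac{1}{2}}(M)$ via Theorem \ref{td1-2} (using that odd Chern classes are torsion so only $C_{2\lambda}[M]$ with $|\lambda|=n$ contribute, and that $(-1)^{|\lambda|-l(\lambda)}$ sorts the terms by the parity of $l(\lambda)-n$), then apply Hitchin--Sawon's lower bound and Jiang's upper bound. The sign bookkeeping and the role of the hypothesis $n\geq 2$ are handled correctly.
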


The \emph{$\Gamma$-genus} is the complex genus whose formal power series is
\be\label{Gamma function}Q(x)=\frac{1}{{\Gamma(1+x)}}:=\exp(\gamma x)\prod_{i= 1}^{\infty}\big[(1+\frac{x}{i})\exp(-\frac{x}{i})\big],\ee
where
\be\label{gamma constant}\gamma:=\lim_{n\rightarrow\infty}\big[(\sum_{i=1}^n\frac{1}{i})-\ln n\big]\ee
is usually called the \emph{Euler constant}. This $\Gamma$-genus was introduced by Libgober in \cite{Li99} in connection with mirror symmetry. It turns out that (\cite[p.142-143]{Li99}) the Chern class polynomials $Q_i\big(c_1(X),\dots,c_i(X)\big)$ \big(in the notation of (\ref{formal decomposition1})\big) of certain Calabi-Yau manifolds $X$ are  related to the coefficients of the generalized hypergeometric series expansion of the period of a mirror of $X$. He also showed in \cite[p.143]{Li99} that $Q_1(c_1)=\gamma c_1$ and the coefficient of $c_i$ in $Q_i(c_1,\ldots,c_i)$ is $\zeta(i)$ when $i\geq2$.

Building on his work \cite{Ho97}, Hoffman presented in \cite{Ho02} a closed formula for the coefficients $b_{\lambda}(\Gamma)$ by introducing a ring homomorphism from the ring of symmetric functions to $\mathbb{Q}$ and explaining $b_{\lambda}(\Gamma)$ as images of monomial symmetric functions (more details can be found in Section \ref{The gamma-genus}). We shall extend Hoffman's observation to show the following
\begin{theorem}[$\Leftarrow$Theorem \ref{thm gamma genus}]\label{gamma genus}
When $m_1(\lambda)=0$, i.e., $\lambda_{l(\lambda)}\geq 2$, the coefficients $b_{\lambda}(\Gamma)$ of the Chern numbers $C_{\lambda}[\cdot]$ in the $\Gamma$-genus are given by
$$b_{\lambda}(\Gamma)=
\frac{\zeta_S(\lambda_1,\ldots,\lambda_{l(\lambda)})}{\prod_{i}m_i(\lambda)!},$$
which is always positive.
In particular, these determine all the coefficients for Calabi-Yau manifolds.
\end{theorem}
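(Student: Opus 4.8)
The plan is to reduce the statement to an elementary manipulation of the defining generating series, exploiting the ``unfolding'' $\zeta(k)=\sum_{n\ge 1}n^{-k}$. I first take the logarithm of (\ref{Gamma function}): using the Weierstrass product together with $\log(1+\tfrac{x}{i})-\tfrac{x}{i}=\sum_{k\ge 2}\tfrac{(-1)^{k-1}}{k}\tfrac{x^{k}}{i^{k}}$ and summing over $i$, the Euler constant in (\ref{gamma constant}) absorbs the linear terms and I obtain
\[
\log Q(x)=\gamma\,x+\sum_{k\ge 2}\frac{(-1)^{k-1}}{k}\,\zeta(k)\,x^{k}.
\]
Summing over the Chern roots and writing $p_{k}=\sum_{i}x_{i}^{k}$ for the power sums, the product in (\ref{formal decomposition1}) becomes $\prod_{i}Q(x_{i})=\exp\!\big(\gamma p_{1}+\sum_{k\ge 2}\tfrac{(-1)^{k-1}}{k}\zeta(k)\,p_{k}\big)$. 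By (\ref{formal decomposition2}) and the identification of the Chern number $C_{\lambda}$ with the monomial $e_{\lambda}=\prod_{j}e_{\lambda_{j}}$, the coefficient $b_{\lambda}(\Gamma)$ is precisely the coefficient of $e_{\lambda}$ in this symmetric function.

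Next I would dispose of the Euler constant. Since $p_{1}=e_{1}$, every power-sum monomial carrying a factor $p_{1}$ expands into $e$-monomials all divisible by $e_{1}$; hence when $m_{1}(\lambda)=0$ the term $\gamma p_{1}$ cannot contribute to the coefficient of $e_{\lambda}$. Concretely, I pass to the quotient $\Lambda/(e_{1})$ of the ring of symmetric functions $\Lambda$ (Section \ref{symmetric function}), in which the coefficient of $e_{\lambda}$ is unchanged for $m_{1}(\lambda)=0$ and the series reduces to $\overline{f}=\exp\!\big(\sum_{k\ge 2}\tfrac{(-1)^{k-1}}{k}\zeta(k)\,p_{k}\big)$. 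The key observation is that the exponent is itself built from elementary generating series: with $E(t)=\sum_{m\ge 0}e_{m}t^{m}$ one has $\log E(t)=\sum_{k\ge 1}\tfrac{(-1)^{k-1}}{k}p_{k}t^{k}$, so modulo $e_{1}$ the substitution $t=1/n$ gives $\log E(1/n)\equiv\sum_{k\ge 2}\tfrac{(-1)^{k-1}}{k}p_{k}n^{-k}$, and summing over $n\ge 1$ reproduces $\sum_{k\ge 2}\tfrac{(-1)^{k-1}}{k}\zeta(k)p_{k}$ degree by degree. Exponentiating yields the infinite product
\[
\overline{f}=\prod_{n\ge 1}\Big(1+\sum_{m\ge 2}e_{m}\,n^{-m}\Big),
\]
which converges coefficient-wise precisely because all parts in sight are $\ge 2$.

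It then remains to read off the coefficient of $e_{\lambda}$. Each factor (indexed by $n$) contributes at most one generator $e_{m}$, so a monomial $e_{\lambda}=\prod_{j}e_{\lambda_{j}}$ is assembled by placing the parts $\lambda_{1},\dots,\lambda_{l(\lambda)}$ on \emph{distinct} integers $n_{1},\dots,n_{l(\lambda)}$, the weight of a placement being $\prod_{j}n_{j}^{-\lambda_{j}}$. Summing over all injective placements overcounts each physical choice by the number $\prod_{i}m_{i}(\lambda)!$ of permutations of equal parts, whence the coefficient equals $\tfrac{1}{\prod_{i}m_{i}(\lambda)!}\sum_{n_{1},\dots,n_{l(\lambda)}\ \mathrm{distinct}}\prod_{j}n_{j}^{-\lambda_{j}}$. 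Partitioning the set of distinct tuples according to the relative order of the $n_{j}$ turns each of the $l(\lambda)!$ chambers into a strictly decreasing sum, i.e. a value $\zeta(\lambda_{\sigma(1)},\dots,\lambda_{\sigma(l(\lambda))})$ as in (\ref{MZV}); summing over $\sigma\in S_{l(\lambda)}$ gives the symmetrization (\ref{symmetrization of MZV}). This yields $b_{\lambda}(\Gamma)=\zeta_{S}(\lambda_{1},\dots,\lambda_{l(\lambda)})/\prod_{i}m_{i}(\lambda)!$. Positivity is then immediate, since each summand of $\zeta_{S}$ is a convergent series of positive terms; and because $c_{1}=0$ forces $C_{\lambda}[M]=0$ whenever $m_{1}(\lambda)>0$, exactly the partitions with $m_{1}(\lambda)=0$ survive for Calabi-Yau manifolds, so the formula determines all of their relevant coefficients.

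I expect the only genuinely delicate points to be bookkeeping: justifying the reduction modulo $(e_{1})$ and the degree-by-degree convergence of the infinite product in the completed ring of symmetric functions, and checking that the chamber decomposition of the distinct-index sum matches (\ref{symmetrization of MZV}) exactly. Both become routine once the unfolding $t=1/n$ is in place. Indeed this is the same mechanism that, in the general framework of Section \ref{symmetric function}, governs the transition matrices between the power-sum and elementary bases and their Doubilet-type evaluation, so the argument can equally be packaged as a special case of that framework rather than carried out by hand.
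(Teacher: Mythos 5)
Your proof is correct, but it takes a genuinely different route from the paper's. Both arguments start from the same expansion $\log Q(x)=\gamma x+\sum_{k\ge2}\tfrac{(-1)^{k-1}}{k}\zeta(k)x^k$ and Newton's identity relating $E(t)$ to the power sums. From there the paper proceeds structurally: it first establishes Hoffman's identification $b_{\lambda}(\Gamma)=\zeta\big(m_{\lambda}(\mathbf{x})\big)$ for the homomorphism (\ref{values of p}) via the Cauchy-type identity (\ref{formula m-e relation}), then expresses $m_{\lambda}$ in the power-sum basis by Doubilet's M\"obius-inversion formula (\ref{mx-px-transition}), applies $\zeta$, and finally re-sums the resulting set-partition sum into $\zeta_S$ by Hoffman's formula (\ref{Hoffman formula MZV}). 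You instead bypass both Doubilet's formula and Hoffman's formula: after killing the $\gamma p_1$ term by passing to $\Lambda/(e_1)$ (legitimate, since $p_1=e_1$ and the $e_{\lambda}$ with $m_1(\lambda)=0$ descend to a basis of the quotient), you ``unfold'' $\zeta(k)=\sum_n n^{-k}$ to rewrite the generating function as $\prod_{n\ge1}E(1/n)$ and extract the coefficient of $e_{\lambda}$ directly; this amounts to observing that $m_{\lambda}$ specialized at $x_n=1/n$ equals $\zeta_S(\lambda_1,\ldots,\lambda_{l(\lambda)})/\prod_i m_i(\lambda)!$, which is exactly the content of Example \ref{MZV example} in the case $\pi=\hat{0}$. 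Your argument is more self-contained and elementary for the case $m_1(\lambda)=0$; what it does not give is the first identity in (\ref{identity}), which is valid for all $\lambda$ (with $\gamma$ appearing for singleton blocks of size one), nor the uniform set-partition framework that the paper reuses for the $\text{Td}^{\frac12}$- and Todd genera. One cosmetic point: it is the factors $\exp(-x/i)$ in the Weierstrass product (\ref{Gamma function}) that cancel the linear terms of $\log(1+x/i)$, rather than the Euler constant ``absorbing'' them, but this does not affect the computation.
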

\begin{remark}
In this article, by a Calabi-Yau manifold we mean in the weak sense that its first Chern class is a torsion element.
\end{remark}

Among the complex genera, the Todd genus, whose associated power series is $x/(1-e^{-x}),$ is the most classical as it equals to the arithmetic genus of compact complex manifolds due to the Hirzebruch-Riemann-Roch theorem. But, as we shall see in Section \ref{The Todd-genus}, the arithmetic expressions of the coefficients in the Todd genus cannot be made as compact as those of the $\text{Td}^{\frac{1}{2}}$-genus and $\Gamma$-genus, as illustrated by Theorems \ref{td1-2} and \ref{gamma genus}. Nevertheless, some partial arithmetic information on them can still be obtained, which will be briefly summarized in Proposition \ref{proposition td}.

The rest of this article is organized as follows. We prepare some preliminaries in Section \ref{preliminaries} on the partially ordered set consisting of the partitions of a finite set and four classical bases of the ring of symmetric functions. In Section \ref{section-Doubilet formula} Doubilet's constructions and formulas in \cite{Do72} will be extended and applied to uniformly treat various Hoffman-type formulas. With the tools in Sections \ref{preliminaries} and \ref{section-Doubilet formula} in hand, Section \ref{proof} is devoted to the study of the coefficients in front of Chern numbers for the general complex genera, and the three aforementioned complex genera will be discussed in detail in Section \ref{three cases}, from which Theorems \ref{td1-2} and \ref{gamma genus} follow as direct consequences. Some more remarks on the Chern numbers of irreducible hyper-K\"{a}hler manifolds will be presented in the last section, Section \ref{remark}.

\section{Preliminaries}\label{preliminaries}
\subsection{Partitions of a finite set}
The materials in this subsection can be found in \cite[\S 3]{Sta97}.

A \emph{poset} (partially ordered set) $(P,\leq)$ is a set $P$, together with a binary relation ``$\leq$" on $P$ which is reflexive ($x\leq x$, $\forall~x\in P$),  antisymmetric ($x\leq y$ and $y\leq x$ imply $x=y$), and transitive ($x\leq y$ and $y\leq z$ imply $x\leq z$). We use the obvious notation $x<y$ to mean $x\leq y$ and $x\neq y$. The poset we shall deal with in this article is the case below.

\begin{definition}\label{poset}
\begin{enumerate}
\item
A \emph{partition of a finite set} $S$ is a collection of disjoint nonempty subsets of $S$ whose union is $S$. Denote by $\Pi(S)$ the set consisting of all partitions of $S$. If $\pi=\{\pi_1,\ldots,\pi_l\}\in\Pi(S)$, each $\pi_i$ is called a \emph{block} of $\pi$. Let $l(\pi)=l$ and call it the \emph{length} of $\pi$. As in \cite{Sta97} we use the convention that $[n]:=\{1,\ldots,n\}$ and $\Pi_n:=\Pi([n])$.

\item
Define $\pi\leq\rho$ in $\Pi(S)$ if every block of $\pi$ is contained in a block of $\rho$. Note that $(\Pi(S),\leq)$ is a poset by easily checking the above-mentioned three properties of ``$\leq$". Also note that $(\Pi(S),\leq)$ has a \emph{unique} minimal (resp. maximal) element, denoted by $\hat{0}$ (resp. $\hat{1}$), whose blocks are precisely one-element subsets in $S$, i.e., $l(\hat{0})=|S|$ (resp. which has only one block $S$, i.e., $l(\hat{1})=1$). Here as usual $|\cdot|$ denotes the cardinality of a set.

\item
For $\pi,\rho\in\Pi(S)$, their exists a \emph{unique} element in $\Pi(S)$ denoted by $\pi\wedge\rho$, such that it is a lower bound of $\pi$ and $\rho$ (i.e., $\pi\wedge\rho\leq\pi$ and $\pi\wedge\rho\leq\rho$) and every lower bound $\tau$ of $\pi$ and $\rho$ satisfies $\tau\leq\pi\wedge\rho$. We call $\pi\wedge\rho$ the \emph{greatest lower bound} of $\pi$ and $\rho$ (\cite[p.102]{Sta97}). Note that if $\pi=\{\pi_i\}$ and $\rho=\{\rho_j\}$, then $\pi\wedge\rho=\hat{0}$ if and only if $|\pi_i\cap\rho_j|\leq 1$ for all $i$ and $j$.

\item
Each $\pi=\{\pi_1,\ldots,\pi_l\}\in\Pi(S)$ is naturally associated to an integer partition, denoted by $\lambda({\pi})$, whose parts are $|\pi_1|,\ldots,|\pi_l|$ and which is called the \emph{type} of $\pi$.
\end{enumerate}
\end{definition}
\begin{remark}
The notion of a partition of a finite set defined here should not be confused with that of an integer partition introduced in the Introduction.
\end{remark}
Every poset $(P,\leq)$ can be associated with a \emph{M\"{o}bius functions} $\mu: P\times P\rightarrow\mathbb{Z}$ defined inductively by
\begin{eqnarray}
\mu(x,y):=\left\{\begin{array}{ll}
1,&\text{if $x=y$},\\

-\sum_{x\leq z<y}\mu(x,z),&\text{if $x<y$},\\

0,&\text{otherwise},\\
\end{array} \right.\nonumber
\end{eqnarray}
whose importance lies in the M\"{o}bius inversion formula (\cite[p.116]{Sta97}).
For a thorough treatment on its basic properties and examples we refer the reader to \cite[\S 3.6-3.10]{Sta97}. In the following lemma we only encode the M\"{o}bius function of $(\Pi(S),\leq)$ in the form we shall use (\cite[Example 3.10.4]{Sta97}).
\begin{lemma}\label{Mobius lemma}
Assume that $\pi=\{\pi_1,\ldots,\pi_{l(\pi)}\}\leq\rho=\{\rho_1,\ldots,\rho_{l(\rho)}\}$ in $\Pi(S)$, and
\begin{eqnarray}
\left\{ \begin{array}{ll}
\big(\pi\rightarrow\rho_i\big):=\big\{\pi_j~|~\pi_j\subset\rho_i\big\},\\
~\\
|\pi\rightarrow\rho_i|:=\text{the cardinality of $\big(\pi\rightarrow\rho_i\big)$}.\\
\end{array} \right.\nonumber
\end{eqnarray}
Then the M\"{o}bius function $\mu(\pi,\rho)$ is given by
\be\label{general Mobius function}
\mu(\pi,\rho)=(-1)^{l(\pi)-l(\rho)}\prod_{i=1}^{l(\rho)}
\big(|\pi\rightarrow\rho_i|-1\big)!.\ee
In particular, \be\label{special Mobius function}\mu(\hat{0},\rho)=(-1)^{|S|-l(\rho)}\prod_{i=1}^{l(\rho)}\big(|\rho_i|-1\big)!,\ee
and moreover,
\be\label{absolute value formula}
\sum_{\pi\leq\rho}|\mu(\hat{0},\pi)|=\lambda(\rho)!,\qquad
\lambda(\rho)!:=\prod_i|\rho_i|!.\ee
\end{lemma}

\subsection{Symmetric functions}\label{symmetric function}
The standard references of this subsection are \cite[\S 1]{Ma95} and \cite[\S 7]{Sta99}.

Let $\mathbb{Q}[[x_1,x_2,\ldots]]$ be the ring of formal power series over $\mathbb{Q}$ in a countably infinite set of (commuting) variables $x_i$. An $f(x_1,x_2,\ldots)\in \mathbb{Q}[[x_1,x_2,\ldots]]$ is called a \emph{symmetric function} if it satisfies
$$f(x_{\sigma(1)},x_{\sigma(2)},\ldots)=f(x_1,x_2,\ldots),
\qquad\forall~\sigma\in S_k,~\forall~k\in\mathbb{Z}_{>0}.$$
Here, if $\sigma\in S_k$, $\sigma(i)=i$ for $i>k$ is understood.

Let $\Lambda^k(\mathbf{x})$ be the vector space of symmetric functions of homogeneous degree $k$ \big($\text{deg}(x_i):=1$\big). Then the \emph{ring} of symmetric functions $\Lambda(\mathbf{x}):=\bigoplus_{n=0}^{\infty}\Lambda^k(\mathbf{x})$ consists of all symmetric functions with bounded degree.

In the rest of this subsection we assume that $\lambda=(\lambda_1,\ldots,\lambda_{l(\lambda)})$ is an integer partition of weight $n$.

The \emph{elementary} symmetric function $e_k\in\Lambda^k(\mathbf{x})$ is defined by
$$e_k=e_k(x_1,x_2,\ldots):=\sum_{1\leq i_1<i_2<\cdots<i_k}x_{i_1}x_{i_2}\cdots x_{i_k},$$
and
\be\label{e-def}e_{\lambda}(\mathbf{x}):=
\prod_{i=1}^{l(\lambda)}e_{\lambda_i}\in\Lambda^n(\mathbf{x}).\ee

The \emph{power sum} symmetric function $p_k\in\Lambda^k(\mathbf{x})$ is defined by
$$p_k=p_k(x_1,x_2,\ldots):=\sum_{i=1}^{\infty}x_i^k,$$
and
\be\label{p-def}p_{\lambda}(\mathbf{x}):=
\prod_{i=1}^{l(\lambda)}p_{\lambda_i}\in\Lambda^n(\mathbf{x}).\ee

The \emph{monomial} symmetric function $m_{\lambda}(\mathbf{x})\in\Lambda^n(\mathbf{x})$ is defined by
$$m_{\lambda}(\mathbf{x}):=
\sum_{(\alpha_1,\alpha_2,\ldots)}x_1^{\alpha_1}x_2^{\alpha_2}\cdots,$$
where the sum is over all \emph{distinct} permutations $(\alpha_1,\alpha_2,\ldots)$ of the entries of the vector $\lambda=(\lambda_1,\ldots,\lambda_{l(\lambda)},0,\ldots)$. In other words, $m_{\lambda}(\mathbf{x})$ is the \emph{smallest} symmetric function containing the monomial $x_1^{\lambda_1}x_2^{\lambda_2}\cdots x_{l(\lambda)}^{\lambda_{l(\lambda)}}.$

The \emph{complete} symmetric function $h_k\in\Lambda^k(\mathbf{x})$ is defined by
\be\label{complete function} h_k=h_k(x_1,x_2,\ldots):=\sum_{\overset{\text{integer partitions $\mu$,}}{|\mu|=k}}m_{\mu}(\mathbf{x}),\ee
and
$$h_{\lambda}(\mathbf{x}):=
\prod_{i=1}^{l(\lambda)}h_{\lambda_i}\in\Lambda^n(\mathbf{x}).$$

It is well-known that (\cite[\S 1.2]{Ma95}) the four sets \be\label{four bases}\big\{e_{\lambda}~\big|~|\lambda|=n\big\},\qquad \big\{p_{\lambda}~\big|~|\lambda|=n\big\},\qquad \big\{m_{\lambda}~\big|~|\lambda|=n\big\},\qquad \big\{h_{\lambda}~\big|~|\lambda|=n\big\}\ee
are all bases of the vector space $\Lambda^n(\mathbf{x})$.

The following fact will be used in the sequel (\cite[p.292]{Sta99}).
\begin{lemma}
Let $\{x_1,x_2,\ldots\}$ and $\{y_1,y_2,\ldots\}$ be two countable sets of (commuting) variables $x_i$ and $y_j$. Then we have
\be\label{formula m-e relation}
\begin{split}
\prod_{i,j=1}^{\infty}(1+x_iy_j)&=1+\sum_{|\lambda|\geq 1}m_{\lambda}(\mathbf{x})
e_{\lambda}(\mathbf{y})\\
&=1+\sum_{|\lambda|\geq 1}m_{\lambda}(\mathbf{y})
e_{\lambda}(\mathbf{x}),
\end{split}
\ee
where the sum is over all positive integer partitions.
\end{lemma}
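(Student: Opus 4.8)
The plan is to prove the first equality by expanding the infinite product and extracting, for each integer partition $\lambda$, the coefficient of a single representative $\mathbf{x}$-monomial; the second equality then follows immediately from the symmetry of the left-hand side under interchanging the two sets of variables. First I would expand $\prod_{i,j}(1+x_iy_j)$ as a formal sum over finite subsets $T\subseteq\mathbb{Z}_{>0}\times\mathbb{Z}_{>0}$, where $T$ records the set of index pairs $(i,j)$ for which the corresponding factor contributes $x_iy_j$ rather than $1$. Each such $T$ contributes the monomial $\prod_{(i,j)\in T}x_iy_j$, and upon grouping by the $x$-variables the total exponent of $x_i$ is exactly the number of pairs in $T$ with first coordinate $i$, i.e. $|S_i|$ where $S_i:=\{\,j:(i,j)\in T\,\}$.

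Next I would fix a partition $\lambda=(\lambda_1,\ldots,\lambda_{l(\lambda)})$ and compute the coefficient of the monomial $x_1^{\lambda_1}x_2^{\lambda_2}\cdots x_{l(\lambda)}^{\lambda_{l(\lambda)}}$. To produce this monomial one must choose, for each $i\in\{1,\ldots,l(\lambda)\}$, a set $S_i\subseteq\mathbb{Z}_{>0}$ with $|S_i|=\lambda_i$, and these choices are independent across the rows $i$, since distinct rows draw on disjoint collections of factors. Each row $i$ contributes the squarefree $\mathbf{y}$-monomial $\prod_{j\in S_i}y_j$ (squarefree because the $j\in S_i$ are distinct), and summing over all $S_i$ of size $\lambda_i$ yields precisely $e_{\lambda_i}(\mathbf{y})$. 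The independence of the rows then gives the coefficient $\prod_{i=1}^{l(\lambda)}e_{\lambda_i}(\mathbf{y})=e_{\lambda}(\mathbf{y})$, using \eqref{e-def}. This factorization is the combinatorial heart of the argument.

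Finally I would invoke symmetry. Because the whole product is symmetric in the $x$-variables, the coefficient of any monomial obtained by permuting the exponents of $x_1^{\lambda_1}\cdots x_{l(\lambda)}^{\lambda_{l(\lambda)}}$ is likewise $e_{\lambda}(\mathbf{y})$; summing these equal contributions over all distinct permutations of the exponent vector reconstitutes $m_{\lambda}(\mathbf{x})\,e_{\lambda}(\mathbf{y})$. Summing over all partitions $\lambda$ with $|\lambda|\geq 1$, together with the empty selection $T=\varnothing$ contributing the constant $1$, gives the first displayed identity. Interchanging the roles of $\mathbf{x}$ and $\mathbf{y}$—legitimate since the left-hand side is invariant under $x_i\leftrightarrow y_i$—then produces the second.

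The main obstacle is not conceptual but a matter of rigor in the formal-power-series setting: one must justify the rearrangement of the expansion degree by degree rather than as an unconditionally convergent sum. This is harmless here because $\Lambda(\mathbf{x})$ consists of symmetric functions of bounded degree and each graded piece $\Lambda^n(\mathbf{x})$ is finite-dimensional, so for each fixed total degree only finitely many $\lambda$ contribute and the coefficient extraction above is a finite operation in each degree. I would state this once at the outset so that the subsequent manipulations are read as identities between the homogeneous components of equal degree.
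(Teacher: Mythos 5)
Your argument is correct and complete: the expansion over finite subsets $T\subseteq\mathbb{Z}_{>0}\times\mathbb{Z}_{>0}$, the row-by-row factorization giving coefficient $e_{\lambda}(\mathbf{y})$ for the representative monomial $x_1^{\lambda_1}\cdots x_{l(\lambda)}^{\lambda_{l(\lambda)}}$, the appeal to symmetry in the $x$-variables to reassemble $m_{\lambda}(\mathbf{x})$, and the degree-by-degree justification of the rearrangement are all sound. Note that the paper itself offers no proof of this lemma --- it is stated as a known fact with a citation to Stanley --- so there is nothing to compare against except the cited source, and your coefficient-extraction argument is essentially the standard proof given there. No gaps.
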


\section{Doubilet's formulas and applications}\label{section-Doubilet formula}
Since the vector space $\Lambda^n(\mathbf{x})$ has four bases in (\ref{four bases}), a natural question is what the transition matrices are between these four bases. We denote by, for instance, $M(e,m)$ the transition matrix $(M_{\lambda\mu})$ of coefficients in the equations $$e_{\lambda}(\mathbf{x})=\sum_{\mu}M_{\lambda\mu}\cdot m_{\mu}(\mathbf{x}),$$
and other transition matrices are similarly denoted. Except the three cases $M(e,m)$, $M(h,m)$ and $M(p,m)$, whose entries are quite easy to describe by their very definitions (\cite[\S7.4-7.7]{Sta99}), the entries in other transition matrices are not so direct to describe. In \cite{Do72} Doubilet applied the M\"{o}bius inversion to give a unified and compact treatment on the entries of \emph{all} these transition matrices.

In this section, we shall explain that Doubilet's constructions can be extended to a more general framework into which two useful situations fit well. One will yield the transition matrices of the four bases (\ref{four bases}), as originally considered by Doubilet in \cite{Do72}. The other will lead to various Hoffman-type formulas (\cite{Ho92}, \cite{Ho19}, \cite{Zh}) which relate the symmetrization of multiple-(star) zeta functions (\ref{symmetrization of MZV}), (\ref{symmetrization of MSZV}) and various variants to the original Riemann zeta function (\ref{R-Z functions}).

The following definition is an extension to \cite[\S 3]{Do72}. The form we adopt below is also inspired by the arguments in \cite[\S 3]{BB18}.
\begin{definition}\label{def of p,m,e,h}
\begin{enumerate}
\item
Fix a \emph{finite} set $S=\{a_1,\ldots,a_{|S|}\}$ and a (possibly infinite) set $D$. Let $x(a_i,n)$ be (commuting) variables parametrized by $a_i\in S$ and $n\in D$, and
$$x(T,n):=\prod_{a_i\in T}x(a_i,n),\qquad\forall~T\subset S,~\forall~n\in D.$$

\item
Let $a_i,a_j\in S$ and $\pi\in\Pi(S)$. The notation ``$a_i\overset{\pi}{\sim}a_j$" is used to denote that $a_i$ and $a_j$ belong to the same block of $\pi$.

\item
For an integer partition $\lambda=(\lambda_1,\ldots,\lambda_l)$, $\lambda!:=\prod_{i=1}^l\lambda_i!.$

\item
Let $\pi=\{\pi_1,\ldots,\pi_l\}\in\Pi(S)$ and define
\be\label{p}
p(\pi):=\sum_{n_1,\ldots,n_l\in D}x(\pi_1,n_1)x(\pi_2,n_2)\cdots x(\pi_l,n_l),\ee
\be\label{m}
m(\pi):=\sum_{\overset{n_1,\ldots,n_l\in D,}{\text{$n_i$ are distinct}}}x(\pi_1,n_1)x(\pi_2,n_2)\cdots x(\pi_l,n_l),\ee

\be\label{e}
e(\pi):=\sum_{\overset{n_1,\ldots,n_{|S|}\in D,}{\text{$n_i\neq n_j$ if $a_i\overset{\pi}{\sim}a_j$}}}x(a_1,n_1)x(a_2,n_2)\cdots x(a_{|S|},n_{|S|}),\ee
and
\be\label{h}h(\pi):=\sum_{\rho\in\Pi(S)}\lambda(\pi\wedge\rho)!\cdot m(\rho),\ee
where recall from Definition \ref{poset} that $\pi\wedge\rho$ is the greatest lower bound of $\pi$ and $\rho$, and $\lambda(\cdot)$ is the type of a partition in $\Pi(S)$ introduced in Definition \ref{poset}.
\end{enumerate}
\end{definition}

When taking $D=\mathbb{Z}_{>0}$ and $x(a_i,n)=x_n$ independent of $a_i\in S$, Definition \ref{def of p,m,e,h} specializes to the following example, which is exactly the case considered in \cite[Thms 1 and 5]{Do72} and justifies the notation in (\ref{p})-(\ref{h}).
\begin{example}[Doubilet]\label{Doubilet example}
Take $D=\mathbb{Z}_{>0}$ and $x(a_i,n)=x_n$ in Definition \ref{def of p,m,e,h}. Then $x(\pi_i,n_i)=(x_{n_i})^{|\pi_i|}$ and  (\ref{p})-(\ref{h}) become
\begin{eqnarray}
\left\{\begin{array}{ll}
p(\pi)=p_{\lambda(\pi)}(\mathbf{x}),\\
~\\
m(\pi)=\Big[\prod_im_i\big(\lambda(\pi)\big)!\Big]\cdot
m_{\lambda(\pi)}(\mathbf{x}),\\
~\\
e(\pi)=\lambda(\pi)!\cdot e_{\lambda(\pi)}(\mathbf{x}),\\
~\\
h(\pi)=\lambda(\pi)!\cdot h_{\lambda(\pi)}(\mathbf{x}).\\
\end{array} \right.\nonumber
\end{eqnarray}
\end{example}
\begin{remark}
Since the constructions and symbols used in \cite{Do72} are different from ours, we briefly explain that Example \ref{Doubilet example} is \emph{exactly} the case treated in \cite{Do72} for the reader's convenience. Following the notation in \cite[p.379]{Do72}, let $$F:=\{f:~S\rightarrow\mathbb{Z}_{>0}\},\qquad \gamma(f):=\prod_{i\in\mathbb{Z}_{>0}}(x_i)^{|f^{-1}(i)|},\qquad \ker(f):=\{f^{-1}(i)~|~i\in\mathbb{Z}_{>0}\}\in\Pi(S).$$
Then
$$p(\pi)=\sum_{n_1,\ldots,n_l\in \mathbb{Z}_{>0}}(x_{n_1})^{|\pi_1|}\cdots(x_{n_l})^{|\pi_l|}
=\sum_{\{f\in F|\ker(f)\geq\pi\}}\gamma(f)$$
and
$$m(\pi)=\sum_{\overset{n_1,\ldots,n_l\in\mathbb{Z}_{>0},}{\text{$n_i$ are distinct}}}(x_{n_1})^{|\pi_1|}\cdots(x_{n_l})^{|\pi_l|}
=\sum_{\{f\in F|\ker(f)=\pi\}}\gamma(f).$$
Recall from Definition \ref{poset} that $\ker(f)\wedge\pi=\hat{0}$ is equivalent to $|f^{-1}(i)\cap\pi_j|\leq 1$ for all $i$ and $j$. Thus
$$e(\pi)=\sum_{\overset{n_1,\ldots,n_{|S|}\in \mathbb{Z}_{>0},}{\text{$n_i\neq n_j$ if $a_i\overset{\pi}{\sim}a_j$}}}x_{n_1}x_{n_2}\cdots x_{n_{|S|}}=\sum_{\{f\in F|\ker(f)\cap\pi=\hat{0}\}}\gamma(f).$$
The original definition of $h(\pi)$ in \cite[\S 4]{Do72} is a little complicated, but it turns out that it can be expressed in terms of $m(\pi)$ like (\ref{h}) (\cite[Thm 6]{Do72}), which we take as its definition.
\end{remark}

Another example we are interested in is the following
\begin{example}\label{MZV example}
Let real numbers $t_1,\ldots,t_r>1$ be given. Take $S=[r]$ , $D=\mathbb{Z}_{>0}$ and $x(a,n)=\frac{1}{n^{t_a}}$ in Definition \ref{def of p,m,e,h}. For $\pi=\{\pi_1,\ldots,\pi_l\}\in\Pi_r$, denote by $t_{\pi_i}:=\sum_{j\in\pi_i}t_j$.

Recall the notation in (\ref{MZV})-(\ref{symmetrization of MSZV}). It is not difficult to check that in this case (\ref{p})-(\ref{h}) become
\begin{eqnarray}
\left\{\begin{array}{ll}
p(\pi)=\prod_{i=1}^l\zeta(t_{\pi_i}),\\
~\\
m(\pi)=\zeta_S(t_{\pi_1},\ldots,t_{\pi_l}),\\
~\\
e(\pi)=\prod_{i=1}^l\zeta_S(t_{u_{i,1}},\ldots,t_{u_{i,|\pi_i|}}),~
\big(\pi_i:=\{u_{i,1},\ldots,s_{u,|\pi_i|}\}\big)\\
~\\
h(\pi)=\prod_{i=1}^l\zeta_S^{\star}(t_{u_{i,1}},\ldots,t_{u_{i,|\pi_i|}}).\\
\end{array} \right.\nonumber
\end{eqnarray}
\end{example}
The following transition formulas were obtained in \cite[Thms 2 and 7]{Do72} by applying the M\"{o}bius inversion on the poset $\Pi(S)$ as well as some other tricks.
\begin{theorem}[Doubilet]\label{Doubilet formula}
Let the notation be as in Definition \ref{def of p,m,e,h}. Then we have
\be\label{m-p-transition}m(\pi)=\sum_{\pi\leq\rho}\mu(\pi,\rho)p(\rho),\ee
and
\be\label{h-p-transition}h(\pi)=\sum_{\rho\leq\pi}
\big|\mu(\hat{0},\rho)\big|p(\rho),\ee
where $\mu(\cdot,\cdot)$ is the M\"{o}bius function of the poset $\Pi(S)$ given in Lemma \ref{Mobius lemma}.
\end{theorem}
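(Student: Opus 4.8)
The plan is to derive both transition formulas from a single structural relation together with Möbius inversion on the poset $(\Pi(S),\leq)$, establishing (\ref{m-p-transition}) first and then bootstrapping (\ref{h-p-transition}) from it. For (\ref{m-p-transition}) I would first prove the ``dual'' relation
\[
p(\pi)=\sum_{\pi\leq\rho}m(\rho),\qquad\forall\,\pi\in\Pi(S).
\]
To see this, fix $\pi=\{\pi_1,\ldots,\pi_l\}$ and inspect the unrestricted sum (\ref{p}) defining $p(\pi)$: each index tuple $(n_1,\ldots,n_l)\in D^l$ determines a coincidence pattern, namely the partition $\rho\geq\pi$ of $S$ obtained by merging those blocks $\pi_i,\pi_j$ for which $n_i=n_j$. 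For a fixed $\rho\geq\pi$ the tuples producing exactly this pattern are those whose indices are constant on each $\rho$-block and distinct between different $\rho$-blocks; summing the corresponding monomials reproduces $m(\rho)$ after using $\prod_{\pi_i\subset\rho_t}x(\pi_i,\tilde n)=x(\rho_t,\tilde n)$. Collecting over patterns gives the displayed identity, and the dual form of Möbius inversion (\cite[\S 3.7]{Sta97}) immediately yields (\ref{m-p-transition}).

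For (\ref{h-p-transition}) the product structure $h(\pi)=\prod_i h(\pi_i)$ reduces everything to the single-block case $\pi=\hat{1}_T$ for a finite set $T$. Once I establish the single-block identity
\[
h(T):=\sum_{\rho\in\Pi(T)}\lambda(\rho)!\,m(\rho)=\sum_{\sigma\in\Pi(T)}\big|\mu(\hat{0},\sigma)\big|\,p(\sigma),
\]
the general case follows by multiplying these identities over the blocks $\pi_1,\ldots,\pi_l$ of $\pi$, noting that $p(\rho)$, $\lambda(\rho)!$ and $\big|\mu(\hat{0},\rho)\big|=\prod_t(|\rho_t|-1)!$ all factor according to the grouping $\rho=\bigcup_i(\rho|_{\pi_i})$ induced by $\rho\leq\pi$. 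One subtlety to flag here is that one must \emph{not} attempt to merge the block-wise monomial sums into a single $m(\rho)$, since $\prod_i m(\rho|_{\pi_i})\neq m(\rho)$ in general; working block by block circumvents this. To prove the single-block identity I would substitute (\ref{m-p-transition}) into the definition of $h(T)$ and interchange the order of summation,
\[
h(T)=\sum_{\sigma}p(\sigma)\sum_{\rho\leq\sigma}\lambda(\rho)!\,\mu(\rho,\sigma).
\]
The interval $[\hat{0},\sigma]$ in $\Pi(T)$ factors as $\prod_t\Pi(\sigma_t)$, and both $\lambda(\rho)!$ and $\mu(\rho,\sigma)$ (via (\ref{general Mobius function})) are multiplicative along this factorization, so the inner coefficient becomes $\prod_t\big[\sum_{\tau\in\Pi(\sigma_t)}\lambda(\tau)!\,\mu(\tau,\hat{1})\big]$.

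What then remains is the purely arithmetic core, which I expect to be the main obstacle: for a set of size $N$ one needs
\[
\sum_{\tau\in\Pi([N])}\lambda(\tau)!\,\mu(\tau,\hat{1})=(N-1)!,\qquad\text{equivalently}\qquad\sum_{\mu\vdash N}\frac{(-1)^{l(\mu)-1}(l(\mu)-1)!}{\prod_i m_i(\mu)!}=\frac1N.
\]
Granting this, the inner coefficient above equals $\prod_t(|\sigma_t|-1)!=\big|\mu(\hat{0},\sigma)\big|$, completing the single-block identity and hence (\ref{h-p-transition}). I would settle the numerical identity formally: grouping partitions by length $k$ turns its left-hand side into $\sum_k\frac{(-1)^{k-1}}{k}\binom{N-1}{k-1}$, where $\binom{N-1}{k-1}$ counts compositions of $N$ into $k$ positive parts, and then $\frac1k\binom{N-1}{k-1}=\frac1N\binom{N}{k}$ combined with $\sum_{k=0}^N(-1)^k\binom{N}{k}=0$ delivers the value $\frac1N$. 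Equivalently, this is the coefficient extraction $[t^N]\log\!\big(1+\tfrac{t}{1-t}\big)=[t^N]\log\tfrac{1}{1-t}=\frac1N$, the formal-power-series incarnation of the same fact; I would present whichever version reads most cleanly in context.
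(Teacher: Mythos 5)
Your proposal is correct, and it is worth separating the two halves. For (\ref{m-p-transition}) you argue exactly as the paper does: the paper's own remark after Theorem \ref{Doubilet formula} observes that $p(\pi)=\sum_{\pi\leq\rho}m(\rho)$ follows from sorting the index tuples in (\ref{p}) by their coincidence pattern, and that M\"obius inversion then gives the result; your write-up just makes the coincidence-pattern bookkeeping explicit. For (\ref{h-p-transition}), however, the paper gives \emph{no} proof at all --- it states that ``more tricks are involved'' and defers entirely to Doubilet \cite{Do72}. Your argument fills that gap with a self-contained derivation, and each step checks out: the reduction to the single-block case is legitimate because $p(\rho)$, $\lambda(\rho)!$ and $\lvert\mu(\hat{0},\rho)\rvert=\prod_t(\lvert\rho_t\rvert-1)!$ all factor over the restrictions $\rho|_{\pi_i}$ (and you are right to flag that $m(\rho)$ does \emph{not} factor this way, which is precisely why one must pass to the $p$-basis before multiplying over blocks); the interchange of summation and the factorization of the interval $[\hat{0},\sigma]\cong\prod_t\Pi(\sigma_t)$ with multiplicative $\mu$ and $\lambda(\cdot)!$ are standard; and the arithmetic core $\sum_{\tau\in\Pi([N])}\lambda(\tau)!\,\mu(\tau,\hat{1})=(N-1)!$ is verified correctly via $\tfrac1k\binom{N-1}{k-1}=\tfrac1N\binom{N}{k}$ (equivalently $[t^N]\log\tfrac{1}{1-t}=\tfrac1N$). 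What your route buys is a uniform M\"obius-theoretic proof of both identities from the single seed relation $p=\sum m$, at the modest cost of one combinatorial identity; the paper buys brevity by outsourcing the harder identity to the literature.
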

\begin{proof}
For completeness as well as for the reader's convenience, we copy the proof from \cite{Do72} verbatim.

The definitions (\ref{p}) and (\ref{m}) themselves imply that  \be\label{trivial}p(\pi)=\sum_{\pi\leq\rho}m(\rho),\ee
from which, together with the M\"{o}bius inversion (\cite[p.116]{Sta97}), (\ref{m-p-transition}) follows directly. This is the first part in \cite[Thm 2]{Do72}, which has also been shown in \cite[\S 3]{BB18} independently.

For (\ref{h-p-transition}), we have (see \cite[p.385]{Do72})
\be\begin{split}
h(\pi)&=\sum_{\rho}\lambda(\pi\wedge\rho)!\cdot m(\rho)\\
&=\sum_{\rho}\Big[\sum_{\tau\leq\pi\wedge\rho}
|\mu(\hat{0},\tau)|\Big]\cdot m(\rho)\qquad\big(\text{by (\ref{absolute value formula})}\big)\\
&=\sum_{\rho}\Big[\sum_{\tau\leq\pi,\tau\leq\rho}
|\mu(\hat{0},\tau)|\Big]\cdot m(\rho)\\
&=\sum_{\tau\leq\pi}|\mu(\hat{0},\tau)|\sum_{\rho\geq\tau}m(\rho)\\
&=\sum_{\tau\leq\pi}|\mu(\hat{0},\tau)|p(\tau),\qquad\big(\text{by (\ref{trivial})}\big)
\end{split}\nonumber\ee
which is exactly the second part in \cite[Thm 7]{Do72}.
\end{proof}

\begin{remark}
\begin{enumerate}
\item
Doubilet indeed obtained \emph{all} the transition matrices among the four basis (\ref{four bases}) in terms of the M\"{o}bius function $\mu(\cdot,\cdot)$ in \cite{Do72}. For our later purpose we only need and hence state the above two in Theorem \ref{Doubilet formula}. Doubilet's constructions in Example \ref{Doubilet example} and various transformation formulas among these four bases have been generalized in \cite{RS} to the associated algebra of symmetric functions in \emph{noncommuting} variables.

\item
As mentioned above, (\ref{m-p-transition}) has also been observed in \cite[\S 3]{BB18} and subsequently applied to obtain the transformation formula (\ref{mx-px-transition}) and the Hoffman-type formula (\ref{Hoffman formula MZV}) below. Our treatment in Definition \ref{def of p,m,e,h}
is indeed partially inspired by some arguments in \cite[\S 3]{BB18}. 
\end{enumerate}
\end{remark}
Applying (\ref{m-p-transition}) to Example \ref{Doubilet example}, together with the concrete expression of the M\"{o}bius function in (\ref{general Mobius function}), leads to
\begin{corollary}[Doubilet]
Let $\lambda=(\lambda_1,\ldots,\lambda_{l(\lambda)})=
(1^{m_1(\lambda)}2^{m_2(\lambda)}\cdots)$ be an integer partition and $\lambda_{\pi_i}:=\sum_{j\in\pi_i}\lambda_j$. Then we have
\be\label{mx-px-transition}m_{\lambda}(\mathbf{x})=
\frac{1}{\prod_im_i(\lambda)!}\sum_{\pi\in\Pi_{l(\lambda)}}
\Bigg\{(-1)^{l(\lambda)-l(\pi)}
\prod_{i=1}^{l(\pi)}\Big[\big(|\pi_i|-1\big)!\cdot
p_{\lambda_{\pi_i}}(\mathbf{x})\Big]\Bigg\}.\ee
\end{corollary}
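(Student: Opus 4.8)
The plan is to specialize the transition identity (\ref{m-p-transition}) to the concrete setting of Example \ref{Doubilet example}, choosing the starting set partition so that its type is exactly $\lambda$, and then to read off the M\"obius coefficients directly from (\ref{general Mobius function}). First I would fix a finite set $S$ with $|S|=|\lambda|$ together with a set partition $\pi^\circ=\{B_1,\ldots,B_{l(\lambda)}\}\in\Pi(S)$ whose blocks have sizes $|B_j|=\lambda_j$, so that its type is $\lambda(\pi^\circ)=\lambda$. Evaluating the two sides of (\ref{m-p-transition}) at $\pi=\pi^\circ$ in the specialization $D=\mathbb{Z}_{>0}$, $x(a_i,n)=x_n$ of Example \ref{Doubilet example}, the left-hand side becomes
$$m(\pi^\circ)=\Big[\prod_i m_i(\lambda)!\Big]\cdot m_\lambda(\mathbf{x}),$$
while each summand on the right is $p(\rho)=p_{\lambda(\rho)}(\mathbf{x})$. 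Dividing by $\prod_i m_i(\lambda)!$ therefore reduces the statement to evaluating $\sum_{\pi^\circ\leq\rho}\mu(\pi^\circ,\rho)\,p(\rho)$.

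The combinatorial heart of the argument is the reindexing of this sum. A coarsening $\rho\geq\pi^\circ$ is obtained precisely by grouping the blocks $B_1,\ldots,B_{l(\lambda)}$, so such $\rho$ correspond bijectively to set partitions $\pi=\{\pi_1,\ldots,\pi_{l(\pi)}\}\in\Pi_{l(\lambda)}$ of the label set $[l(\lambda)]$: the $i$-th block of $\rho$ is $\bigcup_{j\in\pi_i}B_j$, of cardinality $\sum_{j\in\pi_i}\lambda_j=\lambda_{\pi_i}$. Consequently $p(\rho)=\prod_{i=1}^{l(\pi)}p_{\lambda_{\pi_i}}(\mathbf{x})$. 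For the M\"obius factor I would note that the number $|\pi^\circ\to\rho_i|$ of blocks of $\pi^\circ$ absorbed by the $i$-th block of $\rho$ equals $|\pi_i|$, while $l(\pi^\circ)=l(\lambda)$ and $l(\rho)=l(\pi)$; substituting these into (\ref{general Mobius function}) yields
$$\mu(\pi^\circ,\rho)=(-1)^{l(\lambda)-l(\pi)}\prod_{i=1}^{l(\pi)}\big(|\pi_i|-1\big)!.$$
Feeding both computations back into (\ref{m-p-transition}), merging $\prod_i(|\pi_i|-1)!$ with $\prod_i p_{\lambda_{\pi_i}}(\mathbf{x})$ into a single product over the blocks of $\pi$, and dividing by $\prod_i m_i(\lambda)!$ then produces exactly the asserted identity.

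I expect the only point requiring genuine care to be the reindexing bijection together with the verification that $|\pi^\circ\to\rho_i|=|\pi_i|$, i.e.\ that the M\"obius value depends only on $\pi$ and not on the internal structure of the blocks $B_j$. Equivalently, this is the standard fact that the principal filter $[\pi^\circ,\hat{1}]\subset\Pi(S)$ is isomorphic as a poset to the full partition lattice $\Pi_{l(\lambda)}$ with $\pi^\circ\mapsto\hat{0}$, so that (\ref{special Mobius function}) could be invoked in place of (\ref{general Mobius function}) to the same effect. Once this identification is in place, everything else is routine bookkeeping of signs and factorials.
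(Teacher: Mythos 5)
Your proposal is correct and follows exactly the route the paper intends: specialize the transition identity (\ref{m-p-transition}) to Example \ref{Doubilet example} at a set partition of type $\lambda$, identify the interval above it with $\Pi_{l(\lambda)}$, and read off the M\"obius coefficients from (\ref{general Mobius function}). The paper leaves these reindexing details implicit, and your write-up supplies them correctly.
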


Now applying (\ref{m-p-transition}) \big(resp. (\ref{h-p-transition})\big) to  Example \ref{MZV example} by taking $\pi=\hat{0}$ (resp. $\pi=\hat{1}$), together with the help of (\ref{special Mobius function}), yields the following formula (\ref{Hoffman formula MZV}) \big((resp. (\ref{Hoffman formula MSZV})\big) due to Hoffman (\cite[Thms 2.1 and 2.2]{Ho92}) (recall $\hat{0}$ and $\hat{1}$ in Definition \ref{poset}).
\begin{corollary}[Hoffman]
For any real $t_1,\ldots,t_r>1$, we have
\be\label{Hoffman formula MZV}
\zeta_S(t_1,\ldots,t_r)=\sum_{\pi\in\Pi_{r}}\Bigg\{
(-1)^{r-l(\pi)}\prod_{i=1}^{l(\pi)}
\Big[\big(|\pi_i|-1\big)!\cdot\zeta(t_{\pi_i})\Big]\Bigg\},\ee
and
\be\label{Hoffman formula MSZV}
\zeta_S^{\star}(t_1,\ldots,t_r)=\sum_{\pi\in\Pi_{r}}
\prod_{i=1}^{l(\pi)}\Big[\big(|\pi_i|-1\big)!\cdot\zeta(t_{\pi_i})\Big],
\ee
where $t_{\pi_i}:=\sum_{j\in\pi_i}t_j$.
\end{corollary}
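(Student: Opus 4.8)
The plan is to obtain both identities by specializing Doubilet's two transition formulas of Theorem \ref{Doubilet formula} to the zeta-value dictionary of Example \ref{MZV example}, evaluated at the two extreme elements of the partition lattice $\Pi_r$. The only ingredients required are that dictionary, the explicit Möbius value (\ref{special Mobius function}), and the fact that $\hat{0}$ is the minimum and $\hat{1}$ the maximum of $\Pi_r$.

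First, for (\ref{Hoffman formula MZV}) I set $\pi=\hat{0}$ in (\ref{m-p-transition}). Since $\hat{0}$ consists of the singleton blocks $\{1\},\ldots,\{r\}$, so that $t_{\pi_i}=t_i$ and $l=r$, Example \ref{MZV example} gives $m(\hat{0})=\zeta_S(t_1,\ldots,t_r)$. Because $\hat{0}$ is the minimum of $\Pi_r$, the condition $\hat{0}\leq\rho$ holds for every $\rho\in\Pi_r$, so the right-hand side of (\ref{m-p-transition}) ranges over the entire lattice. Substituting $\mu(\hat{0},\rho)=(-1)^{r-l(\rho)}\prod_i(|\rho_i|-1)!$ from (\ref{special Mobius function}) and $p(\rho)=\prod_i\zeta(t_{\rho_i})$ from Example \ref{MZV example}, then grouping the factors block by block, yields (\ref{Hoffman formula MZV}) after renaming $\rho$ to $\pi$.

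Second, for (\ref{Hoffman formula MSZV}) I set $\pi=\hat{1}$ in (\ref{h-p-transition}). Here $\hat{1}$ has the single block $[r]$, so Example \ref{MZV example} gives $h(\hat{1})=\zeta_S^{\ast}(t_1,\ldots,t_r)$, and since $\hat{1}$ is the maximum the condition $\rho\leq\hat{1}$ again holds for all $\rho\in\Pi_r$. Inserting $|\mu(\hat{0},\rho)|=\prod_i(|\rho_i|-1)!$ and the same expression for $p(\rho)$ produces (\ref{Hoffman formula MSZV}). The alternating sign $(-1)^{r-l(\rho)}$ is absent precisely because Doubilet's $h$-to-$p$ formula carries the absolute value $|\mu(\hat{0},\rho)|$; this is the structural reason the star version is sign-free while the unstarred one alternates.

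The genuine obstacle is not the lattice bookkeeping, which is routine, but justifying the two identifications of Example \ref{MZV example} that feed the argument, above all $h(\hat{1})=\zeta_S^{\ast}(t_1,\ldots,t_r)$. Unwinding Definition \ref{def of p,m,e,h} gives $h(\hat{1})=\sum_{\rho\in\Pi_r}\lambda(\rho)!\,m(\rho)=\sum_{\rho\in\Pi_r}\big(\prod_i|\rho_i|!\big)\,\zeta_S(t_{\rho_1},\ldots,t_{\rho_{l(\rho)}})$, so one must check that this weighted sum over set partitions reproduces the symmetrized non-strict sum $\zeta_S^{\ast}$. I expect to verify this by a counting argument on the summation indices: a tuple $(n_1,\ldots,n_r)$ contributing to $\zeta_S^{\ast}$ partitions its coordinates into groups of equal value, and for a group of size $k$ the $k!$ internal orderings are exactly accounted for by the factor $|\rho_i|!$ attached to the corresponding block of $m(\rho)$; matching the two bookkeepings across all coincidence patterns gives the equality. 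The companion identifications $m(\hat{0})=\zeta_S$ and $p(\rho)=\prod_i\zeta(t_{\rho_i})$ are immediate from the definitions (\ref{m}) and (\ref{p}) once $x(a,n)=1/n^{t_a}$ is inserted and the sum is factored over blocks.
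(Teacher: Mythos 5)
Your proposal is correct and follows exactly the paper's route: the paper derives (\ref{Hoffman formula MZV}) and (\ref{Hoffman formula MSZV}) by specializing Doubilet's transition formulas (\ref{m-p-transition}) and (\ref{h-p-transition}) to Example \ref{MZV example} at $\pi=\hat{0}$ and $\pi=\hat{1}$ respectively, inserting the M\"{o}bius values from (\ref{special Mobius function}). Your extra verification that $h(\hat{1})=\zeta_S^{\ast}(t_1,\ldots,t_r)$ via the coincidence-pattern count is sound and simply fills in what the paper dismisses as ``not difficult to check'' in Example \ref{MZV example}.
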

\begin{remark}
These Hoffman-type formulas later have various variants. For example, there is an odd version (\cite{Ho19}), an alternating version or a finite version (\cite[\S 7-8]{Zh}) and so on. In all these cases, we can suitably choose the set $D$ in Definition \ref{def of p,m,e,h} and apply Theorem \ref{Doubilet formula} to arrive at the expected formulas.
\end{remark}

\section{The coefficients of Chern numbers in complex genera}
\label{proof}
With the materials in Sections \ref{preliminaries} and \ref{section-Doubilet formula} in hand, we can now examine the coefficients in front of Chern numbers for general complex genera in this section, and then focus on three cases (the $\text{Td}^{\frac{1}{2}}$-genus, $\Gamma$-genus as well as Todd-genus) in the next section.

Let $\varphi$ be a complex genus whose associated power series is
$$Q(x)=1+\sum_{i=1}^{\infty}a_ix^i,$$
and
\be\label{formal decomposition4}
1+\sum_{n=1}^{\infty}Q_n\big(e_1(\mathbf{x}),\cdots,e_n(\mathbf{x})\big):=\prod_{i=1}^{\infty}Q(x_i),\ee
where as before $e_n(\mathbf{x})$ is the $n$-th elementary symmetric function of the variables $x_1,x_2,\ldots,$ and $Q_n$ the homogeneous part of degree $n$. Denote by
\be\label{formal decomposition3}
Q_n\big(e_1(\mathbf{x}),\cdots,e_n(\mathbf{x})\big)=:
b_n(\varphi)e_n(\mathbf{x})+
\sum_{|\lambda|=n,~l(\lambda)\geq2}b_{\lambda}(\varphi)e_{\lambda}(\mathbf{x}).\ee
Then according to (\ref{formal decomposition1}) and (\ref{formal decomposition2}), for each real $2n$-dimensional compact almost-complex manifold $M$ and an integer partition $\lambda$ of weight $n$, the coefficient in front of the Chern number $C_{\lambda}[M]$ in the value $\varphi(M)$ is exactly $b_{\lambda}(\varphi)$. These $b_{\lambda}(\varphi)$ can be determined by the power series $Q(x)$ in the following manner.
\begin{lemma}\label{technical lemma}
Let $f(x)$ be the formal power series determined by
$$Q(x)=1+\sum_{i=1}^{\infty}a_ix^i=:\frac{x}{f(x)}.$$
\begin{enumerate}
\item
If the coefficients $a_i$ in $Q(x)$ are viewed as $a_i:=e_i(\mathbf{y})=e_i(y_1,y_2,\ldots),$
the $i$-th elementary symmetric functions of the variables $y_1,y_2,\ldots,$ then \be\label{b-m}b_{\lambda}(\varphi)=m_{\lambda}(\mathbf{y}).\ee

\item
The coefficients $b_n(\varphi)$ in (\ref{formal decomposition3}) are determined by
\be\label{cauchy}
1+\sum_{n=1}^{\infty}(-1)^n\cdot b_n(\varphi)\cdot x^n=x\cdot\frac{f'(x)}{f(x)}.
\ee
\end{enumerate}
\end{lemma}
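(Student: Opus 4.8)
The plan is to prove both parts by realizing $Q(x)$ as a formal product and then invoking the Cauchy-type identity (\ref{formula m-e relation}) together with the elementary fact that the single-part monomial symmetric function is a power sum.

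For part (1), I would first observe that the substitution $a_i:=e_i(\mathbf{y})$ turns the defining series into a product: since $\prod_{j\geq 1}(1+xy_j)=\sum_{i\geq 0}e_i(\mathbf{y})x^i$, we get $Q(x)=\prod_{j\geq 1}(1+xy_j)$. Consequently
\be
\prod_{i\geq 1}Q(x_i)=\prod_{i,j\geq 1}(1+x_iy_j)=1+\sum_{|\lambda|\geq 1}m_{\lambda}(\mathbf{y})e_{\lambda}(\mathbf{x}),
\ee
where the last equality is exactly (\ref{formula m-e relation}). Comparing this with the defining expansions (\ref{formal decomposition4}) and (\ref{formal decomposition3}), and using that the $e_{\lambda}(\mathbf{x})$ form a basis of each $\Lambda^n(\mathbf{x})$ (see (\ref{four bases})), I can match the coefficients of $e_{\lambda}(\mathbf{x})$ on both sides and conclude $b_{\lambda}(\varphi)=m_{\lambda}(\mathbf{y})$, which is (\ref{b-m}).

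For part (2), the key reduction is that the single-part case of (\ref{b-m}) gives $b_n(\varphi)=m_{(n)}(\mathbf{y})=p_n(\mathbf{y})$, since the monomial symmetric function attached to the one-part partition $(n)$ is precisely the power sum $\sum_j y_j^n$. The rest is a logarithmic-derivative computation. Writing $f(x)=x/Q(x)$, I have $\ln f(x)=\ln x-\ln Q(x)$, hence $xf'(x)/f(x)=1-xQ'(x)/Q(x)$. Differentiating $\ln Q(x)=\sum_j\ln(1+xy_j)$ and expanding each $xy_j/(1+xy_j)$ as a geometric series yields $xQ'(x)/Q(x)=\sum_{n\geq 1}(-1)^{n-1}p_n(\mathbf{y})x^n$. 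Substituting $p_n(\mathbf{y})=b_n(\varphi)$ and absorbing the sign then gives $xf'(x)/f(x)=1+\sum_{n\geq 1}(-1)^nb_n(\varphi)x^n$, which is (\ref{cauchy}).

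The computations are routine; the only points requiring care are the legitimacy of the formal factorization $Q(x)=\prod_j(1+xy_j)$ (this is the standard universal splitting, under which $a_i=e_i(\mathbf{y})$ is a purely formal identity in the ring of symmetric functions) and the small but crucial observation that $m_{(n)}=p_n$. I expect the main obstacle to be purely bookkeeping: tracking the signs $(-1)^{n-1}$ versus $(-1)^n$ through the logarithmic derivative, and checking that the identification of $b_n(\varphi)$ with the $e_n(\mathbf{x})$-coefficient in (\ref{formal decomposition3}) is consistent with the single-part monomial symmetric function produced by part (1).
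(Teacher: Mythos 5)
Your proposal is correct and follows essentially the same route as the paper's own proof: both parts rest on the factorization $Q(x)=\prod_j(1+xy_j)$ under the substitution $a_i=e_i(\mathbf{y})$, the Cauchy-type identity (\ref{formula m-e relation}) with coefficient comparison in the basis $\{e_{\lambda}(\mathbf{x})\}$, and the logarithmic-derivative computation combined with the observation $m_{(n)}=p_n$. The sign bookkeeping you flag as the main risk works out exactly as in the paper.
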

\begin{remark}
Our notation $f(x)$ here is compatible with that used in the definition of \emph{elliptic genus} (\cite[p.17]{HBJ}).
\end{remark}
\begin{proof}
By (\ref{formal decomposition4}) and (\ref{formal decomposition3}) we have
\be\begin{split}
1+\sum_{|\lambda|\geq1}b_{\lambda}(\varphi)
e_{\lambda}(\mathbf{x})&=\prod_{i=1}^{\infty}Q(x_i)\\
&=\prod_{i=1}^{\infty}
\big(1+\sum_{k=1}^{\infty}a_kx_i^k\big)\\
&=\prod_{i=1}^{\infty}
\big[1+\sum_{k=1}^{\infty}e_k(\mathbf{y})x_i^k\big]\\
&=\prod_{i,j=1}^{\infty}\big(1+x_iy_j\big)\\
&\overset{(\ref{formula m-e relation})}{=}1+\sum_{|\lambda|\geq1}m_{\lambda}(\mathbf{y})e_{\lambda}(\mathbf{x}),\\
\end{split}\nonumber\ee
from which (\ref{b-m}) follows. The identity (\ref{cauchy}) is well-known (\cite[p.11]{Hi}). We provide a proof here for the sake of completeness.
\be\begin{split}
x\cdot\frac{f'(x)}{f(x)}&=1-x\frac{\mathrm{d}}{\mathrm{d}x}\log Q(x)\\
&=1-x\frac{\mathrm{d}}{\mathrm{d}x}
\log\prod_{i=1}^{\infty}(1+y_ix)\qquad\big(a_i:=e_i(\mathbf{y})\big)\\
&=1-x\sum_{i=1}^{\infty}\frac{y_i}{1+y_ix}\\
&=1+\sum_{n=1}^{\infty}(-1)^n(\sum_{i=1}^{\infty}y_i^n)x^n.
\end{split}\nonumber\ee
Thus $$\sum_{i=1}^{\infty}y_i^n=p_n(\mathbf{y})=m_{(n)}(\mathbf{y}),$$
which, together with (\ref{b-m}), leads to (\ref{cauchy}).
\end{proof}
Combining (\ref{b-m}), (\ref{mx-px-transition}) and the fact that $p_i(\mathbf{x})=m_{(i)}(\mathbf{x})$ yield the following closed formula for $b_{\lambda}(\varphi)$ in terms of those $b_n(\varphi)$ determined by (\ref{cauchy}).
\begin{lemma}
The coefficients $b_{\lambda}(\varphi)$ in front of the Chern numbers $C_{\lambda}[\cdot]$ for the genus $\varphi$ is given by
\be\label{formula for coefficient}
b_{\lambda}(\varphi)=\frac{1}{\prod_im_i(\lambda)!}\sum_{\pi\in\Pi_{l(\lambda)}}
\Bigg\{(-1)^{l(\lambda)-l(\pi)}\prod_{i=1}^{l(\pi)}\Big[\big(|\pi_i|-1\big)!\cdot
b_{\lambda_{\pi_i}}(\varphi)\Big]\Bigg\},\ee
where $\lambda_{\pi_i}=\sum_{j\in\pi_i}\lambda_j$ and $b_i(\varphi)$  are determined by the identity (\ref{cauchy}).
\end{lemma}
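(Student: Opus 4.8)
The plan is to obtain (\ref{formula for coefficient}) by stacking the three ingredients flagged in the sentence preceding the statement, the only substantive point being a correct identification of the power sums that appear in the transition formula. First I would invoke part (1) of Lemma \ref{technical lemma}: after the substitution $a_i:=e_i(\mathbf{y})$, the identity (\ref{b-m}) reads $b_{\lambda}(\varphi)=m_{\lambda}(\mathbf{y})$, so that the genus-theoretic coefficient is replaced by a purely symmetric-function expression in the auxiliary variables $\mathbf{y}$. This reduces the problem to a computation inside $\Lambda(\mathbf{y})$.

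Next I would apply Doubilet's expansion (\ref{mx-px-transition}) of a monomial symmetric function in the power-sum basis, but evaluated in the variables $\mathbf{y}$ rather than $\mathbf{x}$. This rewrites $m_{\lambda}(\mathbf{y})$ as the sum over $\pi\in\Pi_{l(\lambda)}$ carrying the prefactor $\frac{1}{\prod_i m_i(\lambda)!}$, the sign $(-1)^{l(\lambda)-l(\pi)}$, and the product of factors $\big(|\pi_i|-1\big)!\cdot p_{\lambda_{\pi_i}}(\mathbf{y})$ with $\lambda_{\pi_i}=\sum_{j\in\pi_i}\lambda_j$. At this stage the entire right-hand side of (\ref{formula for coefficient}) is already present, except that it still displays $p_{\lambda_{\pi_i}}(\mathbf{y})$ where the claimed formula wants $b_{\lambda_{\pi_i}}(\varphi)$.

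The crux is therefore the single bridging identity $p_n(\mathbf{y})=b_n(\varphi)$ for every $n$, which I would establish by chaining two facts. For a one-row partition the power sum and the monomial symmetric function agree, $p_n(\mathbf{y})=\sum_i y_i^n=m_{(n)}(\mathbf{y})$; on the other hand (\ref{b-m}) applied to $\lambda=(n)$ gives $m_{(n)}(\mathbf{y})=b_{(n)}(\varphi)$, and $b_{(n)}(\varphi)$ is exactly the coefficient $b_n(\varphi)$ of $e_n(\mathbf{x})$ in (\ref{formal decomposition3}). This is precisely the equality already exhibited inside the proof of Lemma \ref{technical lemma}, where the logarithmic derivative of $Q$ produced $\sum_i y_i^n=p_n(\mathbf{y})=m_{(n)}(\mathbf{y})$ and where (\ref{cauchy}) records the extraction of $b_n(\varphi)$ from $f$. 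Substituting $p_{\lambda_{\pi_i}}(\mathbf{y})=b_{\lambda_{\pi_i}}(\varphi)$ into the expansion of the previous paragraph then yields (\ref{formula for coefficient}) verbatim.

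I expect no real technical obstacle: the argument is one substitution followed by a bookkeeping identity. The one place demanding care is the matching in the third paragraph, namely that the integer $\lambda_{\pi_i}$ serving as the weight of the single power sum $p_{\lambda_{\pi_i}}(\mathbf{y})$ is the same integer feeding the one-index coefficient $b_{\lambda_{\pi_i}}(\varphi)$, so that the transition formula, which a priori lives entirely in $\Lambda(\mathbf{y})$, transports faithfully into a statement about the genus coefficients determined by (\ref{cauchy}). Keeping the two roles of the $\mathbf{y}$-variables clearly separated --- as encoding the Taylor coefficients $a_i$ via $a_i=e_i(\mathbf{y})$, and as the formal arguments of the symmetric functions --- is all that the proof requires.
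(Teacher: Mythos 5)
Your proposal is correct and follows exactly the paper's route: the paper derives (\ref{formula for coefficient}) by combining (\ref{b-m}), Doubilet's transition formula (\ref{mx-px-transition}) applied in the $\mathbf{y}$-variables, and the identification $p_n(\mathbf{y})=m_{(n)}(\mathbf{y})=b_n(\varphi)$, which is precisely your three-step argument. Your write-up merely makes explicit the bridging identity that the paper leaves as a one-line remark.
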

\begin{remark}
This has been observed in \cite[Thm 5]{BB18} for the case of oriented genera. However, the essence of both proofs is the same.
\end{remark}

\section{The three classical complex genera}\label{three cases}
Before treating the three cases in detail, we fix the following notation. The \emph{Bernoulli numbers} $B_{2n}$ ($n\geq 1$) are defined by
\be\label{Bernoulli}\frac{x}{e^x-1}=:1-\frac{1}{2}x+\sum_{n=1}^{\infty}
\frac{B_{2n}}{(2n)!}x^{2n},\ee
and it is well-known that (\cite[p.131]{HBJ})
\be\label{Bernoulli-Riemann-Zeta}\zeta(2n)=
\frac{(-1)^{n-1}(2\pi)^{2n}}{2\cdot(2n)!}B_{2n}.\ee

\subsection{The $\text{Td}^{\frac{1}{2}}$-genus}
\begin{example}\label{calculationtd1-2}
If $Q(x)=(\frac{x}{1-e^{-x}})^{\frac{1}{2}},$ then
the coefficients $b_n=b_n(\text{Td}^{\frac{1}{2}})$ are given by
\begin{eqnarray}\label{coefficient-td1-2}
\left\{\begin{array}{ll}
b_1=\frac{1}{4},\\

b_{2n+1}=0,&(n\geq 1)\\

b_{2n}=\frac{(-1)^{n-1}}{(2\pi)^{2n}}\zeta(2n).&(n\geq 1)\\
\end{array} \right.
\end{eqnarray}
\end{example}
\begin{proof}
In this case the associated $f(x)$ is $\big[x(1-e^{-x})\big]^{\frac{1}{2}}$. Direct calculations show that
\be\begin{split}
1+\sum_{n=1}^{\infty}(-1)^nb_n\cdot x^n&\overset{(\ref{cauchy})}{=}x\cdot\frac{f'(x)}{f(x)}\\
&=\frac{1}{2}(1+\frac{x}{e^x-1})\\
&\overset{(\ref{Bernoulli})}{=}1-\frac{1}{4}x+\sum_{n=1}^{\infty}
\frac{B_{2n}}{2\cdot(2n)!}x^{2n}\\
&\overset{(\ref{Bernoulli-Riemann-Zeta})}{=}1-\frac{1}{4}x+\sum_{n=1}^{\infty}
\frac{(-1)^{n-1}\zeta(2n)}{(2\pi)^{2n}}x^{2n},
\end{split}
\nonumber\ee
which yields (\ref{coefficient-td1-2}).
\end{proof}

Our main observation for the coefficients $b_{\lambda}(\text{Td}^{\frac{1}{2}})$ is
\begin{theorem}\label{theorem td1-2}
Assume that $m_1(\lambda)=0$. Then $b_{\lambda}(\text{Td}^{\frac{1}{2}})=0$ unless $|\lambda|$ is even. If $|\lambda|$ is even, we have
\be\label{coefficient m1=0}b_{\lambda}(\text{Td}^{\frac{1}{2}})=
\frac{(-1)^{l(\lambda)-\frac{|\lambda|}{2}}}
{(2\pi)^{|\lambda|}\cdot\prod_i m_i(\lambda)!}
\sum_{\overset{\pi\in\Pi_{l(\lambda)}}{\text{$\lambda_{\pi_i}$ are even}}}
\prod_{i=1}^{l(\pi)}\big[(|\pi_i|-1)!\cdot\zeta(\lambda_{\pi_i})\big].
\ee
\end{theorem}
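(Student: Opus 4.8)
The plan is to specialize the master formula (\ref{formula for coefficient}) to $\varphi = \text{Td}^{\frac{1}{2}}$ and feed in the one-variable coefficients $b_n = b_n(\text{Td}^{\frac{1}{2}})$ computed in Example \ref{calculationtd1-2}. First I would record the decisive consequence of the hypothesis $m_1(\lambda)=0$: since every part of $\lambda$ is at least $2$, each block sum $\lambda_{\pi_i} = \sum_{j\in\pi_i}\lambda_j$ satisfies $\lambda_{\pi_i}\geq 2|\pi_i|\geq 2$. In particular the exceptional value $b_1 = \frac14$ never occurs inside the product $\prod_{i=1}^{l(\pi)}[(|\pi_i|-1)!\,b_{\lambda_{\pi_i}}]$, so the only dichotomy that matters is the parity of each $\lambda_{\pi_i}$. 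This is precisely why the clean formula requires $m_1(\lambda)=0$: were $\lambda$ to contain parts equal to $1$, a singleton block could produce $\lambda_{\pi_i}=1$ and contribute $b_1\neq 0$, spoiling the parity argument.

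Next I would use $b_{2k+1}=0$ ($k\geq 1$) to observe that a given term in (\ref{formula for coefficient}) survives only when \emph{every} block sum $\lambda_{\pi_i}$ is even. Since $\sum_i \lambda_{\pi_i} = |\lambda|$, the existence of a set partition all of whose block sums are even forces $|\lambda|$ to be even; conversely, if $|\lambda|$ is odd no such $\pi$ exists and the whole sum vanishes. This yields the first assertion $b_\lambda(\text{Td}^{\frac12})=0$ for $|\lambda|$ odd.

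Assuming now that $|\lambda|$ is even, I would substitute $b_{\lambda_{\pi_i}} = (-1)^{\lambda_{\pi_i}/2-1}(2\pi)^{-\lambda_{\pi_i}}\zeta(\lambda_{\pi_i})$ for each surviving $\pi$ (those whose block sums are all even) and factor the analytic data out of the product. The powers of $2\pi$ combine to $(2\pi)^{-\sum_i \lambda_{\pi_i}} = (2\pi)^{-|\lambda|}$, which is independent of $\pi$ and can be pulled to the front. The remaining work is a sign count: the factor $(-1)^{l(\lambda)-l(\pi)}$ from the master formula multiplies $\prod_{i=1}^{l(\pi)}(-1)^{\lambda_{\pi_i}/2-1} = (-1)^{|\lambda|/2 - l(\pi)}$, giving total exponent $l(\lambda)-l(\pi)+|\lambda|/2-l(\pi) = l(\lambda)+|\lambda|/2 - 2l(\pi)$; the dependence on $\pi$ enters only through $-2l(\pi)$ and so drops out modulo $2$, and since $|\lambda|$ is even this has the same parity as $l(\lambda)-|\lambda|/2$. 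Collecting everything reproduces (\ref{coefficient m1=0}).

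The step I expect to demand the most care is this final sign bookkeeping — in particular verifying that the per-block signs aggregate into a global sign depending only on $l(\lambda)$ and $|\lambda|$, and checking the harmless parity identity $(-1)^{l(\lambda)+|\lambda|/2} = (-1)^{l(\lambda)-|\lambda|/2}$ that matches the stated exponent. Everything else is a direct substitution, so the only genuine content is the vanishing/parity observation enabled by the hypothesis $m_1(\lambda)=0$.
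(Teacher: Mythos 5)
Your proposal is correct and follows essentially the same route as the paper's own proof: specialize the master formula (\ref{formula for coefficient}) using the values in Example \ref{calculationtd1-2}, note that $m_1(\lambda)=0$ forces every block sum $\lambda_{\pi_i}\geq 2$ so that only partitions with all block sums even survive (whence the vanishing for $|\lambda|$ odd), and then carry out the same sign and $(2\pi)$-power bookkeeping. No gaps; the sign computation $(-1)^{l(\lambda)+|\lambda|/2-2l(\pi)}=(-1)^{l(\lambda)-|\lambda|/2}$ matches the paper's exactly.
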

\begin{proof}
If $|\lambda|$ is odd, then for each $\pi=\{\pi_1,\ldots,\pi_{l(\pi)}\}\in\Pi_{l(\lambda)}$ at least some $\lambda_{\pi_{i_0}}$ is odd since $|\lambda|=\sum_i\lambda_{\pi_{i}}$. As $m_1(\lambda)=0$, this implies that $\lambda_{\pi_{i_0}}\geq3$ and hence $b_{\lambda_{\pi_{i_0}}}=0$ due to (\ref{coefficient-td1-2}). This tells us that each summand on the right hand side (RHS for short) of (\ref{formula for coefficient}) is zero and consequently $b_{\lambda}(\text{Td}^{\frac{1}{2}})=0$.

If now $|\lambda|$ is even, the above analysis says that only those $\pi=\{\pi_1,\ldots,\pi_{l(\pi)}\}\in\Pi_{l(\lambda)}$ such that all $\lambda_{\pi_i}$ are even can contribute to the RHS of (\ref{formula for coefficient}). In this case the summand on the RHS of (\ref{formula for coefficient}) is
\be\begin{split}
&(-1)^{l(\lambda)-l(\pi)}\prod_{i=1}^{l(\pi)}\Big[\big(|\pi_i|-1\big)!\cdot
b_{\lambda_{\pi_i}}(\varphi)\Big]\\
\overset{(\ref{coefficient-td1-2})}{=}&(-1)^{l(\lambda)-l(\pi)}\prod_{i=1}^{l(\pi)}\Big[\big(|\pi_i|-1\big)!\cdot
\frac{(-1)^{\frac{\lambda_{\pi_i}}{2}-1}}{(2\pi)^{\lambda_{\pi_i}}}\zeta(\lambda_{\pi_i})\Big]\\
=&\frac{(-1)^{l(\lambda)-\frac{|\lambda|}{2}}}{(2\pi)^{|\lambda|}}
\prod_{i=1}^{l(\pi)}\Big[\big(|\pi_i|-1\big)!\cdot\zeta(\lambda_{\pi_i})\Big]
\end{split}\nonumber\ee
and leads to the desired (\ref{coefficient m1=0}).
\end{proof}

Together with (\ref{Hoffman formula MSZV}), a direct consequence of Theorem \ref{theorem td1-2} is a compact expression in terms of MSZV for those integer partitions whose parts are all even, which is exactly Theorem \ref{td1-2} in the Introduction.
\begin{corollary}[=Theorem \ref{td1-2}]\label{coro td1-2}
For the integer partitions $2\lambda=(2\lambda_1,\ldots,2\lambda_{l(\lambda)})$, we have
$$b_{2\lambda}(\text{Td}^{\frac{1}{2}})=
\frac{(-1)^{|\lambda|-l(\lambda)}}{(2\pi)^{2|\lambda|}\cdot\prod_im_i(\lambda)!}\cdot
\zeta^{\star}_S(2\lambda_{1},\ldots,2\lambda_{l(\lambda)}).$$
In particular each $b_{2\lambda}(\text{Td}^{\frac{1}{2}})$ is nonzero with sign $(-1)^{|\lambda|-l(\lambda)}$.
\end{corollary}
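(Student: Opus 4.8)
The plan is to deduce this directly from Theorem \ref{theorem td1-2} by specializing that result to the partition $2\lambda$ and then recognizing the resulting sum as a symmetrized multiple-star zeta value via Hoffman's formula (\ref{Hoffman formula MSZV}). First I would check that $2\lambda=(2\lambda_1,\ldots,2\lambda_{l(\lambda)})$ satisfies the hypothesis of Theorem \ref{theorem td1-2}: every part $2\lambda_i\geq 2$, so $m_1(2\lambda)=0$, and the weight $|2\lambda|=2|\lambda|$ is even. Hence (\ref{coefficient m1=0}) applies with $\lambda$ there replaced by $2\lambda$.

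Next I would translate each factor in (\ref{coefficient m1=0}) from $2\lambda$ back to $\lambda$. The length is unchanged, $l(2\lambda)=l(\lambda)$; the sign exponent becomes $l(2\lambda)-\frac{|2\lambda|}{2}=l(\lambda)-|\lambda|$, which has the same parity as $|\lambda|-l(\lambda)$; the power of $2\pi$ is $(2\pi)^{|2\lambda|}=(2\pi)^{2|\lambda|}$; and the multiplicity product is unchanged, $\prod_i m_i(2\lambda)!=\prod_i m_i(\lambda)!$, because doubling every part merely transports the multiplicities of $\lambda$ onto the even integers without altering their values.

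The key step is to observe that the parity constraint in the summation of (\ref{coefficient m1=0}) becomes vacuous after this substitution: for any $\pi\in\Pi_{l(\lambda)}$ and any block $\pi_i$, the partial sum $(2\lambda)_{\pi_i}=\sum_{j\in\pi_i}2\lambda_j=2\lambda_{\pi_i}$ is automatically even. Thus the sum runs over all of $\Pi_{l(\lambda)}$ and equals $\sum_{\pi\in\Pi_{l(\lambda)}}\prod_{i=1}^{l(\pi)}\big[(|\pi_i|-1)!\,\zeta(2\lambda_{\pi_i})\big]$. Comparing this with Hoffman's formula (\ref{Hoffman formula MSZV}) taken at $r=l(\lambda)$ and $t_j=2\lambda_j$ (so that $t_{\pi_i}=2\lambda_{\pi_i}$), this sum is precisely $\zeta^{\ast}_S(2\lambda_1,\ldots,2\lambda_{l(\lambda)})$, and assembling all the factors yields the claimed closed formula.

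Finally, for the sign statement I would note that every argument satisfies $2\lambda_{\pi_i}\geq 2>1$, so each value $\zeta(2\lambda_{\pi_i})$ is a positive real; since the factorials are positive as well, the entire sum $\zeta^{\ast}_S(2\lambda_1,\ldots,2\lambda_{l(\lambda)})$ is strictly positive, and the prefactor $(2\pi)^{-2|\lambda|}\big(\prod_i m_i(\lambda)!\big)^{-1}$ is positive too. Hence $b_{2\lambda}(\text{Td}^{\frac{1}{2}})$ carries exactly the sign $(-1)^{|\lambda|-l(\lambda)}$ and is in particular nonzero. I do not anticipate a genuine obstacle here, as all the substantive work already sits in Theorem \ref{theorem td1-2} and in Hoffman's formula; the only points demanding care are the bookkeeping of multiplicities under $\lambda\mapsto 2\lambda$ and the matching of the two sign exponents modulo $2$.
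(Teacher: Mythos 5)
Your proposal is correct and follows exactly the route the paper takes: specialize Theorem \ref{theorem td1-2} to the partition $2\lambda$, note that the parity constraint on the blocks becomes vacuous, and identify the resulting sum with $\zeta^{\ast}_S(2\lambda_1,\ldots,2\lambda_{l(\lambda)})$ via Hoffman's formula (\ref{Hoffman formula MSZV}). The paper leaves the bookkeeping (multiplicities under $\lambda\mapsto 2\lambda$, matching of sign exponents, positivity of the zeta values) implicit, whereas you spell it out; there is no substantive difference.
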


\subsection{The $\Gamma$-genus}\label{The gamma-genus}
\begin{definition}
Define an algebra homomorphism $T$ from the algebra of symmetric functions $\Lambda(\mathbf{x})$ to $\mathbb{R}$ by requiring the values of the power sum symmetric functions $p_i(\mathbf{x})$ as follows.
\be\label{values of p}
T:~\Lambda(\mathbf{x})\rightarrow\mathbb{R},\qquad
T\big(p_i(\mathbf{x})\big):=\left\{\begin{array}{ll}
\zeta(i),&i\geq2\\
\gamma,&i=1\\
1,&i=0\\
\end{array} \right.
\qquad\big(p_0(\mathbf{x}):=1\big)
\ee
where $\gamma$ is the Euler constant introduced in (\ref{gamma constant}). Note that the algebra homomorphism $T$ is completely determined by (\ref{values of p}) since $\big\{p_{\lambda}(\mathbf{x})\big\}$ is a basis of $\Lambda(\mathbf{x})$.
\end{definition}
The following result is due to Hoffman (\cite[p.972]{Ho02}) building on \cite[\S 1]{Li99}.
\begin{theorem}[Hoffman]
The coefficient $b_{\lambda}(\Gamma)$ in front of the Chern number $C_{\lambda}[\cdot]$ for the $\Gamma$-genus defined in (\ref{Gamma function}) is given by the image of $m_{\lambda}(\mathbf{x})$ under $T$: \be\label{Hoffman}b_{\lambda}(\Gamma)=T\big(m_{\lambda}(\mathbf{x})\big).\ee
\end{theorem}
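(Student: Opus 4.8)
The plan is to recognize the homomorphism $\zeta$ of (\ref{values of p}) as nothing but the specialization $f(\mathbf{x})\mapsto f(\mathbf{y})$, where $\mathbf{y}=(y_1,y_2,\dots)$ is the auxiliary family of Lemma \ref{technical lemma} determined by $a_i=e_i(\mathbf{y})$ for the coefficients $a_i$ of $Q(x)=1/\Gamma(1+x)$. Granting this identification, part (1) of Lemma \ref{technical lemma} gives $b_\lambda(\Gamma)=m_\lambda(\mathbf{y})=\zeta\big(m_\lambda(\mathbf{x})\big)$ at once, which is precisely (\ref{Hoffman}). Since both $\zeta$ and the specialization are algebra homomorphisms on $\Lambda(\mathbf{x})$ and $\{p_\mu\}$ is a basis, the whole matter reduces to checking that the two maps agree on the power sums $p_n$.

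First I would pin down the numbers $b_n(\Gamma)$. By (\ref{cauchy}) together with $Q=1/\Gamma(1+x)$,
\[
1+\sum_{n=1}^{\infty}(-1)^n b_n(\Gamma)\,x^n=x\frac{f'(x)}{f(x)}=1-x\frac{\mathrm{d}}{\mathrm{d}x}\log Q(x)=1+x\,\psi(1+x),
\]
where $\psi=\Gamma'/\Gamma$. Feeding in the classical expansion $\log\Gamma(1+x)=-\gamma x+\sum_{k\geq 2}\frac{(-1)^k\zeta(k)}{k}x^k$, equivalently $\psi(1+x)=-\gamma+\sum_{k\geq 2}(-1)^k\zeta(k)x^{k-1}$, and comparing coefficients yields $b_1(\Gamma)=\gamma$ and $b_n(\Gamma)=\zeta(n)$ for $n\geq 2$. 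This recovers Libgober's $Q_1=\gamma c_1$ and $Q_i=\zeta(i)c_i+\cdots$ quoted in the Introduction.

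Next, the proof of Lemma \ref{technical lemma} already shows $b_n(\varphi)=p_n(\mathbf{y})$ for any genus $\varphi$; hence $p_1(\mathbf{y})=\gamma$ and $p_n(\mathbf{y})=\zeta(n)$ for $n\geq 2$. These are exactly the prescribed values (\ref{values of p}) of $\zeta$ on the power sums, so $\zeta\big(p_n(\mathbf{x})\big)=p_n(\mathbf{y})$ for every $n\geq 1$. Writing $m_\lambda=\sum_\mu c_{\lambda\mu}\,p_\mu$ in the power-sum basis with universal rational coefficients $c_{\lambda\mu}$ (explicitly those displayed in (\ref{mx-px-transition})), the homomorphism property then gives
\[
\zeta\big(m_\lambda(\mathbf{x})\big)=\sum_\mu c_{\lambda\mu}\prod_i\zeta\big(p_{\mu_i}(\mathbf{x})\big)=\sum_\mu c_{\lambda\mu}\prod_i p_{\mu_i}(\mathbf{y})=m_\lambda(\mathbf{y})=b_\lambda(\Gamma),
\]
the last equality being (\ref{b-m}).

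The only genuine analytic input is the Taylor expansion of $\log\Gamma(1+x)$ in the first step, which supplies the zeta values and the Euler constant; everything else is the formal observation that two algebra homomorphisms determined by their action on a basis coincide as soon as they agree on that basis. I therefore expect the main (indeed the only) obstacle to be the verification on power sums, namely the computation $p_n(\mathbf{y})=\zeta(n)$, after which the passage from power sums to the monomial symmetric functions is routine bookkeeping via (\ref{mx-px-transition}). The hypothesis $m_1(\lambda)=0$ in the Introduction's Theorem \ref{gamma genus} plays no role here; it is only needed afterwards to convert $\zeta\big(m_\lambda(\mathbf{x})\big)$ into the symmetrized multiple zeta value $\zeta_S(\lambda_1,\ldots,\lambda_{l(\lambda)})/\prod_i m_i(\lambda)!$, which is where the Hoffman-type formula (\ref{Hoffman formula MZV}) enters.
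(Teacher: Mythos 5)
Your proposal is correct. It rests on the same underlying idea as the paper's proof --- namely that the homomorphism $\zeta$ of (\ref{values of p}) coincides with the specialization $\Lambda(\mathbf{x})\to\mathbb{R}$ sending $e_i\mapsto a_i$, i.e.\ evaluation at the formal roots $\mathbf{y}$ of $1/\Gamma(1+x)$ --- but you execute the verification differently. The paper checks agreement on the elementary symmetric functions: it applies $\zeta$ to the exponential formula $1+\sum e_it^i=\exp\big[-\sum\tfrac{p_i}{i}(-t)^i\big]$ and matches the result against the expansion of $\log\Gamma(1-x)$ to get $\zeta(e_i)=a_i$, then runs the Cauchy identity (\ref{formula m-e relation}) once more inside the proof to land on $b_\lambda(\Gamma)=\zeta(m_\lambda)$. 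You instead check agreement on the power sums: from the logarithmic-derivative identity (\ref{cauchy}) and the digamma expansion you get $b_1(\Gamma)=\gamma$, $b_n(\Gamma)=\zeta(n)$ for $n\geq 2$, while the proof of Lemma \ref{technical lemma} gives $b_n(\varphi)=p_n(\mathbf{y})$; since two algebra homomorphisms agreeing on the $p_n$ agree on all of $\Lambda(\mathbf{x})$, Lemma \ref{technical lemma}(1) finishes the argument. The analytic input (the Taylor expansion of $\log\Gamma$) is identical in both proofs, and the Cauchy identity enters both, in your case packaged inside (\ref{b-m}). Your route is slightly more economical within the paper's framework because it reuses Lemma \ref{technical lemma} wholesale rather than reproving its content via (\ref{2}); the paper's version is more self-contained and makes the identity $\zeta(e_i)=a_i$ explicit, which is the form Hoffman originally used. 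Your closing remark about the role of $m_1(\lambda)=0$ is also accurate: it is irrelevant to (\ref{Hoffman}) and only enters in Theorem \ref{thm gamma genus} via (\ref{Hoffman formula MZV}).
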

\begin{proof}
Since the proof in \cite{Ho02} resorts to some materials in \cite[\S 5]{Ho97}, for the reader's convenience we provide a proof here. First we have (\cite[p.45]{Er53})
\be\label{1}\log\Gamma(1-x)=\gamma x+\sum_{i=2}^{\infty}\frac{\zeta(i)}{i}\cdot x^i,\ee
and the symmetric functions $e_i=e_i(\mathbf{x})$ and $p_i=p_i(\mathbf{x})$ are related by (\cite[p.21-23]{Ma95})
\be\label{2}1+\sum_{i=1}^{\infty}e_i\cdot t^i
=\exp\big[-\sum_{i=1}^{\infty}\frac{p_i}{i}\cdot(-t)^i\big].\ee
Thus
\be\label{3}
\begin{split}
1+\sum_{i=1}^{\infty}T(e_i)\cdot t^i&\overset{(\ref{2})}{=}
\exp\big[-\sum_{i=1}^{\infty}\frac{T(p_i)}{i}\cdot(-t)^i\big]\\
&\overset{(\ref{values of p})}{=}\exp\big[\gamma t-\sum_{i=2}^{\infty}\frac{\zeta(i)}{i}\cdot(-t)^i\big]\\
&\overset{(\ref{1})}{=}\frac{1}{\Gamma(1+t)}.\end{split}\ee
Thus
\be\begin{split}
1+\sum_{|\lambda|\geq1}b_{\lambda}(\Gamma)
e_{\lambda}(\mathbf{y})&=\prod_{i=1}^{\infty}\frac{1}{\Gamma(1+y_i)}\\
&\overset{(\ref{3})}{=}\prod_{i=1}^{\infty}
\big[1+\sum_{k=1}^{\infty}T(e_k)\cdot y^k_i\big]\\
&=T\big[\prod_{i,j=1}^{\infty}(1+y_ix_j)\big]\\
&\overset{(\ref{formula m-e relation})}{=}
1+\sum_{|\lambda|\geq1}T(m_{\lambda})e_{\lambda}(\mathbf{y}),
\end{split}\nonumber\ee
which completes the proof.
\end{proof}
With these materials in hand, we can proceed to show the result below, from which Theorem \ref{gamma genus} in the Introduction follows.
\begin{theorem}\label{thm gamma genus}
The coefficient $b_{\lambda}(\Gamma)$ in front of the Chern number $C_{\lambda}[\cdot]$ for the $\Gamma$-genus are given by
\be\label{identity}\begin{split} b_{\lambda}(\Gamma)&=\frac{1}{\prod_im_i(\lambda)!}\sum_{\pi\in\Pi_{l(\lambda)}}
\Bigg\{(-1)^{l(\lambda)-l(\pi)}
\prod_{i=1}^{l(\pi)}\Big[\big(|\pi_i|-1\big)!\cdot
\zeta(\lambda_{\pi_i})\Big]\Bigg\}\qquad\big(\zeta(1):=\gamma\big)\\
&\xlongequal{m_1(\lambda)=0}\frac{\zeta_S(\lambda_1,\ldots,\lambda_{l(\lambda)})}{\prod_{i}m_i(\lambda)!}.
\end{split}\ee
In the first identity $\zeta(\lambda_{\pi_i})=\gamma$ occurs if and only if $|\pi_i|=1$, say $\pi_i=\{j\}$, and $\lambda_j=1$. In particular, the second identity holds true for Calabi-Yau manifolds.
\end{theorem}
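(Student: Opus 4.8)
The plan is to derive both identities by feeding the ring homomorphism $\zeta$ of (\ref{values of p}) through Doubilet's transition formula (\ref{mx-px-transition}). The starting point is Hoffman's identity (\ref{Hoffman}), which already identifies $b_{\lambda}(\Gamma)$ with the image $\zeta\big(m_{\lambda}(\mathbf{x})\big)$; the remaining task is then purely \emph{formal}, namely to expand $m_{\lambda}(\mathbf{x})$ in the power-sum basis and apply $\zeta$ term by term.

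First I would substitute (\ref{mx-px-transition}) into (\ref{Hoffman}). Because $\zeta$ is an algebra homomorphism, it leaves the rational scalars $(-1)^{l(\lambda)-l(\pi)}\big(|\pi_i|-1\big)!\big/\prod_i m_i(\lambda)!$ untouched and distributes over the product $\prod_{i=1}^{l(\pi)}p_{\lambda_{\pi_i}}(\mathbf{x})$. Here each exponent $\lambda_{\pi_i}=\sum_{j\in\pi_i}\lambda_j$ is a single positive integer, so each factor is one power sum, and (\ref{values of p}) gives $\zeta\big(p_{\lambda_{\pi_i}}(\mathbf{x})\big)=\zeta(\lambda_{\pi_i})$ with the convention $\zeta(1):=\gamma$. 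Collecting these factors reproduces exactly the first displayed identity. To settle when the value $\gamma$ appears, I would observe that since every part satisfies $\lambda_j\geq 1$, the block sum $\lambda_{\pi_i}$ equals $1$ if and only if $\pi_i$ is a singleton $\{j\}$ with $\lambda_j=1$, which is the stated criterion.

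Finally I would specialize to $m_1(\lambda)=0$. In that case no part equals $1$, so $\lambda_{\pi_i}\geq 2$ for every block of every $\pi\in\Pi_{l(\lambda)}$; hence no $\gamma$ ever occurs and the combinatorial sum involves only Riemann zeta values at integers $\geq 2$. With $r=l(\lambda)$ and $t_j=\lambda_j$ this sum is literally the right-hand side of Hoffman's formula (\ref{Hoffman formula MZV}), so it equals $\zeta_S(\lambda_1,\ldots,\lambda_{l(\lambda)})$ and the second identity follows upon dividing by $\prod_i m_i(\lambda)!$. For a Calabi-Yau manifold $c_1$ is torsion, so every Chern number $C_{\lambda}[M]$ attached to a partition with a part equal to $1$ vanishes; only the partitions with $m_1(\lambda)=0$ survive, and for these the second identity applies, which proves the last assertion. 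The argument is essentially a bookkeeping exercise with no serious obstacle; the only point demanding care is the treatment of the $i=1$ term, together with the recognition that the hypothesis $m_1(\lambda)=0$ is precisely what excises the Euler constant and makes the Doubilet expansion through $\zeta$ coincide verbatim with Hoffman's multiple zeta value symmetrization.
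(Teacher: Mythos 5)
Your proposal is correct and follows exactly the paper's own route: apply the algebra homomorphism $\zeta$ of (\ref{values of p}) to Doubilet's expansion (\ref{mx-px-transition}) via Hoffman's identity (\ref{Hoffman}) to get the first formula, then invoke the Hoffman-type formula (\ref{Hoffman formula MZV}) when $m_1(\lambda)=0$. The only difference is that you spell out the bookkeeping (distribution of $\zeta$ over products, the singleton criterion for $\gamma$, and the Calabi--Yau reduction) that the paper leaves implicit.
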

\begin{proof}
Due to (\ref{Hoffman}) and (\ref{values of p}), we apply the ring homomorphism $T(\cdot)$ on both sides of (\ref{mx-px-transition}) to yield the first identity in (\ref{identity}) with the convention that $\zeta(1):=\gamma$.

If moreover $m_1(\lambda)=0$, all these $\lambda_{\pi_i}\geq2$ and the second identity follows from the Hoffman-type formula (\ref{Hoffman formula MZV}).
\end{proof}

\subsection{The Todd-genus}\label{The Todd-genus}
Completely analogous to Example \ref{calculationtd1-2} and Theorem \ref{theorem td1-2} we can deduce that
\begin{example}
If $Q(x)=\frac{x}{1-e^{-x}},$ the coefficients $b_n=b_n(\text{Td}$) are
\begin{eqnarray}\label{coefficient-td}
\left\{\begin{array}{ll}
b_1=\frac{1}{2},\\

b_{2n+1}=0,&(n\geq 1)\\

b_{2n}=\frac{2\cdot(-1)^{n+1}}{(2\pi)^{2n}}\zeta(2n),&(n\geq 1)\\
\end{array} \right.
\end{eqnarray}
\end{example}
and that
\begin{proposition}\label{proposition td}
Assume that $m_1(\lambda)=0$. Then $b_{\lambda}(\text{Td})=0$ unless $|\lambda|$ is even. If $|\lambda|$ is even, we have
\be\label{coefficient td m1=0}b_{\lambda}(\text{Td})=
\frac{(-1)^{l(\lambda)-\frac{|\lambda|}{2}}}
{(2\pi)^{|\lambda|}\cdot\prod_i m_i(\lambda)!}
\sum_{\overset{\pi\in\Pi_{l(\lambda)}}{\text{$\lambda_{\pi_i}$ are even}}}
\Big\{2^{l(\pi)}\cdot\prod_{i=1}^{l(\pi)}\big[(|\pi_i|-1)!\cdot\zeta(\lambda_{\pi_i})\big]\Big\}.
\ee
In particular, in the cases of $|\lambda|$ be even and $m_1(\lambda)=0$, $b_{\lambda}(\text{Td})$ are nonzero with sign $(-1)^{l(\lambda)-\frac{|\lambda|}{2}}$.
\end{proposition}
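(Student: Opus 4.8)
The plan is to follow verbatim the two-step route used for the $\text{Td}^{\frac{1}{2}}$-genus in Example \ref{calculationtd1-2} and Theorem \ref{theorem td1-2}, since the Todd genus differs from its square root only by the absence of the exponent $\frac12$. First I would record the one-variable coefficients $b_n(\text{Td})$ through the identity (\ref{cauchy}) of Lemma \ref{technical lemma}. For $Q(x)=x/(1-e^{-x})$ the associated series is $f(x)=x/Q(x)=1-e^{-x}$, whence $x\,f'(x)/f(x)=x e^{-x}/(1-e^{-x})=x/(e^x-1)$; substituting the Bernoulli expansion (\ref{Bernoulli}) and then (\ref{Bernoulli-Riemann-Zeta}) yields (\ref{coefficient-td}), namely $b_1=\frac12$, $b_{2n+1}=0$ for $n\geq 1$, and $b_{2n}=2(-1)^{n+1}(2\pi)^{-2n}\zeta(2n)$. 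The sole difference from the $\text{Td}^{\frac{1}{2}}$ coefficients (\ref{coefficient-td1-2}) is the overall factor of $2$ in the even terms, and this single factor turns out to be the entire source of the discrepancy between the two final formulas.

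Next I would feed these $b_n(\text{Td})$ into the closed formula (\ref{formula for coefficient}). The vanishing assertion is proved exactly as in Theorem \ref{theorem td1-2}: when $|\lambda|$ is odd, every $\pi\in\Pi_{l(\lambda)}$ has at least one block sum $\lambda_{\pi_{i_0}}=\sum_{j\in\pi_{i_0}}\lambda_j$ odd, and the hypothesis $m_1(\lambda)=0$ forces $\lambda_{\pi_{i_0}}\geq 3$, so $b_{\lambda_{\pi_{i_0}}}(\text{Td})=0$ annihilates every summand. When $|\lambda|$ is even, only those $\pi$ all of whose block sums $\lambda_{\pi_i}$ are even can contribute. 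For such a $\pi$ I would substitute $b_{\lambda_{\pi_i}}(\text{Td})=2(-1)^{\lambda_{\pi_i}/2-1}(2\pi)^{-\lambda_{\pi_i}}\zeta(\lambda_{\pi_i})$ and then collect the three kinds of factors across the $l(\pi)$ blocks: the powers of $2\pi$ multiply to $(2\pi)^{-|\lambda|}$; the signs $(-1)^{\lambda_{\pi_i}/2-1}$ multiply to $(-1)^{|\lambda|/2-l(\pi)}$, which combined with the ambient $(-1)^{l(\lambda)-l(\pi)}$ leaves $(-1)^{l(\lambda)-|\lambda|/2}$ after the two copies of $l(\pi)$ cancel modulo $2$; and the factors of $2$ multiply to $2^{l(\pi)}$. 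Assembling these and dividing by $\prod_i m_i(\lambda)!$ produces exactly (\ref{coefficient td m1=0}).

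The final sign and nonvanishing claim then follows by inspection of (\ref{coefficient td m1=0}): since $m_1(\lambda)=0$, each surviving $\lambda_{\pi_i}$ is an even integer $\geq 2$, so each $\zeta(\lambda_{\pi_i})$ is strictly positive, while the factorials and the powers of $2$ are manifestly positive. Moreover the one-block partition $\hat{1}$, whose unique block sum equals the even number $|\lambda|$, always qualifies, so the sum is a nonempty sum of positive terms; the prefactor therefore pins down the sign as $(-1)^{l(\lambda)-|\lambda|/2}$.

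I do not expect any genuine computational difficulty here; the point worth flagging, rather than an obstacle, is the structural reason the Todd genus refuses a compact MSZV form. In the proof of Corollary \ref{coro td1-2} the corresponding sum over $\pi$ with all $\lambda_{\pi_i}$ even collapses, via Hoffman's formula (\ref{Hoffman formula MSZV}), into the single value $\zeta_S^{\ast}(2\lambda_1,\ldots,2\lambda_{l(\lambda)})$. Here the extra weight $2^{l(\pi)}$ depends on the number of blocks and hence obstructs any such collapse, which is precisely why (\ref{coefficient td m1=0}) cannot be rewritten as a lone symmetrized multiple-star zeta value. The care needed in the argument is thus entirely in the bookkeeping of this $\pi$-dependent factor and in verifying that it, and nothing else, breaks the clean pattern of Theorem \ref{td1-2}.
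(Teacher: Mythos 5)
Your proposal is correct and follows exactly the route the paper intends: the paper gives no separate proof for the Todd case, simply declaring it ``completely analogous'' to Example \ref{calculationtd1-2} and Theorem \ref{theorem td1-2}, and your computation of $b_n(\text{Td})$ via (\ref{cauchy}), the substitution into (\ref{formula for coefficient}), and the bookkeeping of the extra factor $2^{l(\pi)}$ is precisely that analogous argument. Your closing observation that the $\pi$-dependent weight $2^{l(\pi)}$ is what blocks a collapse to a single MSZV also matches the paper's own remark following the proposition.
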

\begin{remark}\label{SMZV remark}
\begin{enumerate}
\item
Unlike the formula (\ref{coefficient m1=0}) in Theorem \ref{theorem td1-2}, we are not able to deduce a compact formula in terms of MZV for those integer partitions whose parts are all even, as that in Corollary \ref{coro td1-2}, due to the appearance of the extra factor $2^{l(\pi)}$ in the RHS of (\ref{coefficient td m1=0}).

\item
An easy consequence of (\ref{coefficient td m1=0}) is that if $b_{\lambda}(\text{Td})\neq0$ for \emph{odd} $|\lambda|$, then $m_1(\lambda)\geq1$, i.e., the first Chern class $c_1$ is involved in the Chern number $C_{\lambda}[\cdot]$. This fact has been observed by Hirzebruch (\cite[p.14]{Hi}).

\item
Recently \emph{Schur multiple zeta values} (SMZV for short) were introduced and investigated in \cite{NPY18} by utilizing the semi-standard Young tableaux. They generalize MZV and MSZV in a natural way and also build a bridge to the theory of partitions. Related works can be found in \cite{Ba18}, \cite{BY18}, \cite{BC20}, and \cite{BKSYY23}. One of the referees of our paper pointed out that it would be interesting to see if the RHS of (\ref{coefficient td m1=0}) can be expressible in terms of SMZV. The author hopes to consider it in the near future.
\end{enumerate}
\end{remark}

\section{Some comments on Chern numbers of hyper-K\"{a}hler manifolds}\label{remark}
The known examples of irreducible hyper-K\"{a}hler manifolds are quite scarce. Up to deformations they are Hilbert schemes of points on K3 surfaces, generalized Kummer varieties (\cite{Be}) and two examples in dimensions $6$ and $10$ respectively (\cite{OG1},\cite{OG2}). Many positivity properties on these examples indicate that Chern numbers of irreducible hyper-K\"{a}hler manifolds should satisfy many constraints. The following conjecture was proposed in \cite[Appendix B]{Ni} and raised again recently in \cite[Question 4.8]{OSV22}.
\begin{conjecture}\label{conjecture Chern number}
All (monomial) Chern numbers of irreducible hyper-K\"{a}hler manifolds are positive.
\end{conjecture}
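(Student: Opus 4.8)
\emph{Since Conjecture \ref{conjecture Chern number} is open, what follows is a strategy rather than a complete argument.} The plan is to convert the statement into a positivity question about the holomorphic tangent bundle and then to seek the geometric input that would certify it. As noted in the text, the holomorphic symplectic form identifies $T^{1,0}M$ with its dual, so the Chern roots occur in pairs $\pm y_1,\ldots,\pm y_n$ and
\be\label{eq:HKreduction}
c(T^{1,0}M)=\prod_{i=1}^{n}(1-y_i^2),\qquad c_{2k}=(-1)^{k}e_k(y_1^2,\ldots,y_n^2),
\ee
while the odd Chern classes vanish. Consequently, using (\ref{eq:HKreduction}), every monomial Chern number is $C_{2\lambda}[M]=\int_M\prod_{j}c_{2\lambda_j}$, and since $c_m=s_{1^{m}}$ is the Schur class of a single column, the product $\prod_j c_{2\lambda_j}=\prod_j s_{1^{2\lambda_j}}$ is a nonnegative integral combination of Schur classes $s_\mu(T^{1,0}M)$ by the Littlewood--Richardson rule. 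Thus Conjecture \ref{conjecture Chern number} would follow from the single uniform assertion that \emph{all} Schur numbers $\int_M s_\mu(T^{1,0}M)$ are positive.

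This reformulation places the problem squarely within the positivity theory of vector bundles. By the Fulton--Lazarsfeld theorem, the numerically positive polynomials in the Chern classes of a vector bundle are exactly the nonzero nonnegative combinations of Schur polynomials, and for an \emph{ample} bundle each such Schur number is strictly positive; hence ampleness of $T^{1,0}M$ would immediately give the conjecture through the chain above. The substantive task is therefore to supply a substitute for ampleness that survives on irreducible hyper-K\"ahler manifolds. I would pursue three complementary sources of positivity: a Chern--Weil argument, representing $s_\mu(T^{1,0}M)$ by Schur forms built from the curvature of the Ricci-flat K\"ahler metric and trying to show these forms are (weakly) positive; the Rozansky--Witten graph-weight description of these Chern numbers, in which the holomorphic symplectic form endows the relevant weight system with a definite pairing; and the constraints coming from the Looijenga--Lunts--Verbitsky Lie-algebra action on $H^{*}(M)$ together with positivity of the Beauville--Bogomolov--Fujiki quadratic form. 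One would also feed in the genuinely positive functionals already available, notably the $\text{Td}^{\frac{1}{2}}$-genus, whose coefficients have the definite signs determined in Theorem \ref{td1-2} and which, via (\ref{formula for coefficient}), expresses one sign-alternating combination of the $C_{2\lambda}[M]$ in closed form.

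The hard part is precisely that the natural positivity is absent: a hyper-K\"ahler metric is Ricci-flat, so $T^{1,0}M$ is neither ample nor even nef in general, and the sign of $\int_M s_\mu(T^{1,0}M)$ is invisible pointwise. The Chern--Weil route therefore stalls, because the known Schur-form positivity results require Griffiths- or dual-Nakano-type curvature positivity that a Ricci-flat metric does not possess. Equally, the available positive functionals such as $\text{Td}^{\frac{1}{2}}$ are too few: a single linear functional cannot separate the coordinates $C_{2\lambda}[M]$, so any purely linear-algebraic bootstrap would need a whole family of certified-positive characteristic numbers whose transition matrix against the monomial basis is inverse-nonnegative, and it is unclear that enough such numbers exist. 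In my view the decisive step is to identify the correct weakened positivity---intrinsic to holonomy $\text{Sp}(n)$ and compatible with Ricci-flatness---that nonetheless forces all Schur numbers to be positive; producing this is exactly what keeps the statement a conjecture.
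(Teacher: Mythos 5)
You are right to treat this statement as open: it is a conjecture (attributed to \cite{Ni} and \cite{OSV21}), and the paper contains no proof of it. The paper's Section \ref{remark} only situates it logically --- using Theorem \ref{td1-2} to argue that Conjecture \ref{conjecture Chern number} does \emph{not} suffice to recover the Hitchin--Sawon positivity $\text{Td}^{\frac12}[\cdot]>0$, while Conjecture \ref{conjecture Chern character number} does imply it via (\ref{4}) --- and gives transition formulas between Chern numbers and Chern character numbers. So a strategy sketch is the appropriate genre here, and several of your individual steps are correct: the Chern roots of an irreducible hyper-K\"{a}hler manifold do pair as $\pm y_i$, products of the classes $c_m=s_{(1^m)}$ do expand with nonnegative Littlewood--Richardson coefficients into Schur classes, and Fulton--Lazarsfeld does characterize the numerically positive polynomials for ample bundles as the nonzero nonnegative combinations of Schur polynomials.

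However, the pivot of your plan --- that Conjecture \ref{conjecture Chern number} ``would follow from the single uniform assertion that \emph{all} Schur numbers $\int_M s_\mu(T^{1,0}M)$ are positive'' --- reduces the conjecture to a statement that is provably \emph{false} for irreducible hyper-K\"{a}hler manifolds, so no ``substitute for ampleness'' of the kind you seek can exist. Already for a $K3$ surface one has $s_{(2)}=h_2=c_1^2-c_2$, hence $\int_{K3}s_{(2)}(T^{1,0})=-24<0$. Moreover, the negative Schur numbers cannot be avoided by restricting to the partitions that actually occur in your expansion of monomial Chern numbers: by the dual Pieri rule, $c_2^2=s_{(1,1)}^2=s_{(2,2)}+s_{(2,1,1)}+s_{(1^4)}$, and on any manifold whose odd Chern classes are torsion the dual Jacobi--Trudi identity gives $s_{(2,1,1)}=e_1e_3-e_4=-c_4$, so that $\int_{K3^{[2]}}s_{(2,1,1)}(T^{1,0})=-324<0$ even though $\int_{K3^{[2]}}c_2^2=828>0$. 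Thus the conjectured positivity of monomial Chern numbers, if true, must arise from cancellation among Schur numbers of \emph{both} signs and cannot be an instance of Fulton--Lazarsfeld-type Schur positivity; this mirrors the paper's own observation that even the one certified-positive functional, $\text{Td}^{\frac12}$, has coefficients of both signs in the monomial Chern-number basis. Any viable approach would have to identify a positivity cone strictly smaller than (and not contained in arguments through) the Schur cone, adapted to $\text{Sp}(n)$-holonomy; as stated, your reduction step is the place where the proposal breaks.
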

Another positivity conjecture on Chern character numbers was also raised in \cite{OSV22}. Before stating it, let us introduce one more notation. Let $x_1,\ldots,x_n$ be Chern roots of a compact almost-complex manifold $M$ of real dimension $2n$, i.e., the Chern classes $c_i(M)$ are viewed as $e_i(x_1,\ldots,x_n)$, the $i$-th elementary symmetric polynomials of $x_1,\ldots,x_n$. Let
$$\text{ch}_i(M):=\frac{x_1^i+\cdots+x_n^i}{i!}\in H^{2i}(M;\mathbb{Q}).$$
Then $\sum_{i=0}^n\text{ch}_i(M)$ is the usual Chern character of $M$. Given an integer partition $\lambda=(\lambda_1,\ldots,\lambda_{l(\lambda)})$ of weight $n$, the \emph{Chern character number} $\text{Ch}_{\lambda}[M]$ is defined by
$$\text{Ch}_{\lambda}[M]:=\int_M\prod_{i=1}^{l(\lambda)}\text{ch}_i(M)\in\mathbb{Q}.$$
By its definition any Chern character number is a rationally linear combination of Chern numbers and vice versa. It was shown in \cite[Prop.3.7]{OSV22} that all signed Chern character numbers $(-1)^n\text{Ch}_{2\lambda}$ ($|\lambda|=n$) of a $2n$-dimensional generalized Kummer variety are positive. Based on this, the following conjecture was raised (\cite[Question 4.7]{OSV22}).
\begin{conjecture}\label{conjecture Chern character number}
All signed Chern character numbers of irreducible hyper-K\"{a}hler manifolds are positive.
\end{conjecture}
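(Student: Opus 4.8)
The plan is to recast each signed Chern character number in the symmetric-function language of Sections~\ref{preliminaries}--\ref{proof} and then to locate a geometric source of positivity. Since $\text{ch}_i = p_i/i!$, one has $\text{Ch}_{2\lambda}[M] = \frac{1}{(2\lambda)!}\,\big\langle p_{2\lambda}(\mathbf{x}),[M]\big\rangle$, so a Chern character number is, up to the positive factor $1/(2\lambda)!$, the evaluation of a power-sum symmetric function in the Chern roots against the fundamental class; expanding $p_{2\lambda}$ in the elementary basis $\{e_\mu\}$ would rewrite it as an integral combination of the monomial Chern numbers $C_\mu[M]$ via the $p$-to-$e$ transition governed by the M\"obius calculus on $\Pi(S)$ of Section~\ref{section-Doubilet formula}. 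The key simplification is that for an irreducible hyper-K\"ahler manifold the holomorphic symplectic form gives $T_M\cong\Omega_M$, so the Chern roots occur in pairs $\big\{x_j,-x_j\big\}$ ($j=1,\dots,n$); hence odd power sums vanish, $p_{2k}=2\sum_j x_j^{2k}$, and $\prod(1+t\cdot\text{root})=\prod_j(1-t^2x_j^2)$ forces $c_{2k+1}=0$ and $c_{2k}=(-1)^k e_k(x_1^2,\dots,x_n^2)$. Setting $v_j:=-x_j^2$, so that $c_2=\sum_j v_j$, a short computation turns $(-1)^n\,\text{Ch}_{2\lambda}[M]$ into a positive multiple of $\int_M p_\lambda(\mathbf{v})$; thus the conjecture is \emph{equivalent} to the assertion that every power-sum symmetric function of the $v_j$ integrates positively over $M$, with the uniform sign $(-1)^n$ absorbed into the change of variables. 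In particular the extreme case $\lambda=(1^n)$ reads $(-1)^n\,\text{Ch}_{(2^n)}[M]=\int_M c_2^{\,n}$.

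With this reformulation, two complementary routes are natural. The first is case-by-case verification along the known deformation types, legitimate because monomial Chern numbers are deformation invariants: it suffices to treat one representative of each family---the Hilbert schemes $\mathrm{K3}^{[n]}$, the generalized Kummer varieties, and the two O'Grady examples \cite{OG1},\cite{OG2}. The generalized Kummer case is exactly \cite[Prop.~3.7]{OSV21}; for $\mathrm{K3}^{[n]}$ one would substitute the explicit description of $H^*(\mathrm{K3}^{[n]})$ and its Chern classes (Ellingsrud--G\"ottsche--Lehn, Lehn--Sorger) into $\int_M p_\lambda(\mathbf{v})$ and check positivity. The warm-up $\lambda=(1^n)$ isolates the positivity of $\int_M c_2^{\,n}$, which is expected and known in several settings and which, in the $v$-picture, is the statement that the $n$-th power of $c_2=\sum_j v_j$ integrates positively; this points to a positivity property of $c_2$ and its symmetric powers as the real engine of the conjecture.

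The second route seeks a uniform principle, and this is where I expect the main obstacle. The positivity input presently available---Hitchin--Sawon's $\text{Td}^{\frac{1}{2}}(M)>0$ together with Jiang's bound, as reformulated in the Corollary following Theorem~\ref{td1-2}---controls only a \emph{single} linear combination of the $C_{2\lambda}[M]$, whereas proving $\int_M p_\lambda(\mathbf{v})>0$ for \emph{every} $\lambda$ requires independent control of infinitely many characteristic numbers; moreover the $p$-to-$e$ transition has mixed signs, so even the positivity of all Chern numbers (itself only Conjecture~\ref{conjecture Chern number}) would not formally yield the present conjecture. A genuine proof therefore seems to demand a new structural mechanism---for instance an expression of $\int_M p_\lambda(\mathbf{v})$ through Rozansky--Witten weight-system invariants, or through the Lie-algebra action on $H^*(M)$ of Looijenga--Lunts and Verbitsky, in which positivity could be read off intrinsically. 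Absent such a mechanism, and since the deformation classification of irreducible hyper-K\"ahler manifolds is not known to be complete, even an exhaustive computation over the presently known families would leave the conjecture open in general.
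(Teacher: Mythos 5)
The statement you were asked to prove is Conjecture \ref{conjecture Chern character number}, which the paper presents explicitly as an \emph{open} question (raised in \cite[Question 4.7]{OSV21}); the paper contains no proof of it, only a discussion in Section \ref{remark} of its logical relations to Conjecture \ref{conjecture Chern number} and to the Hitchin--Sawon inequality $\text{Td}^{\frac12}[\cdot]>0$ (namely that it implies the latter via (\ref{4}), and that it is in general independent of the former). Your submission is, by its own admission, not a proof either: after a reformulation you survey two possible routes and conclude that ``even an exhaustive computation over the presently known families would leave the conjecture open in general.'' So there is no argument to compare with the paper's, and the gap is total: no step of your text establishes positivity of any signed Chern character number beyond the generalized Kummer case already settled in \cite[Prop. 3.7]{OSV21}, and the classification of deformation types is not known to be complete, so the case-by-case route cannot close the conjecture even in principle.

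That said, the reformulation you carry out is correct and consistent with what the paper does. Writing the Chern roots of an irreducible hyper-K\"ahler $2n$-fold in pairs $\{x_j,-x_j\}$ and setting $v_j:=-x_j^2$, one has $p_{2k}(\mathbf{x})=2(-1)^kp_k(\mathbf{v})$ and hence
$$(-1)^n\,\text{Ch}_{2\lambda}[M]=\frac{2^{l(\lambda)}}{(2\lambda)!}\int_M p_{\lambda}(\mathbf{v}),$$
so the conjecture is equivalent to the positivity of all $\int_M p_{\lambda}(\mathbf{v})$, the case $\lambda=(1^n)$ reducing to $\int_M c_2^{\,n}>0$; this matches the paper's formula (\ref{chern charater}) and its use of Doubilet's $e$--$p$ transition (\ref{transform e-p}) to pass between Chern numbers and Chern character numbers. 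Your observation that the mixed signs in the M\"obius function prevent any formal derivation of this conjecture from Conjecture \ref{conjecture Chern number} is also the paper's own point, illustrated there by the complex-dimension-$4$ example. But none of this constitutes a proof, and the proposal should not be presented as one.
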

The only \emph{known} positivity result on Chern numbers which holds true for \emph{all} irreducible hyper-K\"{a}hler manifolds seems to be the aforementioned $\text{Td}^{\frac12}[\cdot]>0$ due to Hitchin--Sawon (\cite{HS01}), to the best knowledge of the author.

One of the primary motivations of this article is to see the relations among Conjectures \ref{conjecture Chern number}, \ref{conjecture Chern character number} and $\text{Td}^{\frac12}[\cdot]>0$. It is not difficult from the identity
$$\log\frac{\sinh(x/2)}{x/2}=\sum_{k=1}^{\infty}\frac{B_{2k}}{(2k)!\cdot 2k}x^{2k}$$
to deduce that
\be\label{4}\text{Td}^{\frac12}\xlongequal{c_1=0}
\exp\big(-\sum_{k=1}^{\infty}\frac{B_{2k}}{4k}\cdot\text{ch}_{2k}\big).\ee
Note that the sign of $B_{2k}$ is $(-1)^{k-1}$ \big(see (\ref{Bernoulli-Riemann-Zeta})\big) and hence
$$\text{Conjecture \ref{conjecture Chern character number} $\xLongrightarrow{(\ref{4})}$ $\text{Td}^{\frac12}[\cdot]>0$}.$$

Nevertheless, our Theorem \ref{td1-2} implies that the signs of coefficients in front of (monomial) Chern numbers for $\text{Td}^{\frac12}[\cdot]$ can be both positive and negative.
So in general Conjecture \ref{conjecture Chern number} is \emph{not} able to deduce that $\text{Td}^{\frac12}[\cdot]>0$.

The next question is what the relations are between Conjecture \ref{conjecture Chern number} and Conjecture \ref{conjecture Chern character number}. Note that
\be\label{chern charater}C_{\lambda}[M]=\int_Me_{\lambda}(x_1,\ldots,x_n),\qquad
\text{Ch}_{\lambda}[M]=\frac{1}{\prod_{i=1}^{l(\lambda)}\lambda_i!}
\int_Mp_{\lambda}(x_1,\ldots,x_n)\ee
in the notation of (\ref{e-def}) and (\ref{p-def}), and for irreducible hyper-K\"{a}hler manifolds only those partitions whose parts are all even are involved. Simple example in the case of complex $4$-dimension reads
$$C_{(4)}=\frac12\text{Ch}_{(2,2)}-6\text{Ch}_{(4)},\qquad
\text{Ch}_{(4)}=\frac{1}{12}C_{(2,2)}-\frac{1}{6}C_{(4)}$$
and hence indicate that Conjectures \ref{conjecture Chern number} and \ref{conjecture Chern character number} are in general independent. Here we shall explain that Doubilet's results in \cite{Do72} give closed transformation formulas between Chern numbers and Chern character numbers of irreducible hyper-K\"{a}hler manifolds. Recall in \cite[Thm 3]{Do72} that
\begin{eqnarray}\label{transform e-p}
\left\{\begin{array}{ll}
e(\pi)=\sum_{\rho\leq\pi}\mu(\hat{0},\rho)\cdot p(\rho)\\
~\\
p(\pi)=\frac{1}{\mu(\hat{0},\pi)}\sum_{\rho\leq\pi}\mu(\rho,\pi)\cdot e(\rho),
\end{array} \right.
\end{eqnarray}
where the related notation can be found in Definition \ref{def of p,m,e,h} and Lemma \ref{Mobius lemma}.

Putting Lemma \ref{Mobius lemma}, Example \ref{Doubilet example}, (\ref{chern charater}) and (\ref{transform e-p}) together, we have
\begin{lemma}
Let $M$ be an irreducible hyper-K\"{a}hler manifold of complex dimension $2n$ and $2\nu$ an integer partition of weight $2n$. Fix a partition $\pi\in\Pi_{2n}$ such that $\lambda(\pi)=2\nu$. Then the Chern numbers and Chern character numbers of $M$ are related by
$$C_{2\nu}[M]=\frac{1}{(2\nu)!}\sum_{\overset{\rho\leq\pi}{\text{$|\rho_i|$ are even}}}\mu(\hat{0},\rho)\cdot\lambda(\rho)!\cdot\text{Ch}_{\lambda(\rho)}[M]$$
and
$$\text{Ch}_{2\nu}[M]=\frac{1}{(2\nu)!\cdot\mu(\hat{0},\pi)}
\sum_{\overset{\rho\leq\pi}{\text{$|\rho_i|$ are even}}}\mu(\rho,\pi)\cdot C_{\lambda(\rho)}[M],
$$
where the related notation can be found in Definition \ref{poset} and Lemma \ref{Mobius lemma}.
\end{lemma}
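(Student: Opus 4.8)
The plan is to obtain both identities by pushing Doubilet's abstract transformation formulas \eqref{transform e-p} through two successive specializations: first the combinatorial specialization of Example~\ref{Doubilet example}, which turns $e(\pi),p(\pi)$ into honest symmetric functions, and then the geometric specialization that sends the formal variables to the Chern roots of $M$ and integrates. Concretely, I would fix $\pi\in\Pi_{2n}$ with $\lambda(\pi)=2\nu$ and record, from Example~\ref{Doubilet example}, the dictionary $e(\pi)=\lambda(\pi)!\,e_{\lambda(\pi)}(\mathbf{x})$, $p(\pi)=p_{\lambda(\pi)}(\mathbf{x})$, and likewise $e(\rho)=\lambda(\rho)!\,e_{\lambda(\rho)}(\mathbf{x})$, $p(\rho)=p_{\lambda(\rho)}(\mathbf{x})$ for every $\rho\le\pi$. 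Note that these right-hand sides depend on $\pi$ only through $\lambda(\pi)=2\nu$, so the eventual answer will be independent of the chosen $\pi$, which is a useful consistency check.

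For the first identity I would substitute this dictionary into the first line of \eqref{transform e-p}, obtaining $(2\nu)!\,e_{2\nu}(\mathbf{x})=\sum_{\rho\le\pi}\mu(\hat{0},\rho)\,p_{\lambda(\rho)}(\mathbf{x})$ in $\Lambda^{2n}(\mathbf{x})$, with the Möbius values $\mu(\hat{0},\rho)$ supplied in closed form by Lemma~\ref{Mobius lemma}. I would then specialize the variables to the $2n$ Chern roots $x_1,\dots,x_{2n}$ of $M$ (all others set to $0$) --- legitimate since specialization is a ring map and every term is homogeneous of degree $2n$, equal to the complex dimension, so $\int_M(\cdot)$ applies termwise --- and apply $\int_M$. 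The dictionary \eqref{chern charater} then reads off $\int_M e_{2\nu}=C_{2\nu}[M]$ and $\int_M p_{\lambda(\rho)}=\lambda(\rho)!\,\text{Ch}_{\lambda(\rho)}[M]$, yielding $(2\nu)!\,C_{2\nu}[M]=\sum_{\rho\le\pi}\mu(\hat{0},\rho)\,\lambda(\rho)!\,\text{Ch}_{\lambda(\rho)}[M]$. The second identity is entirely parallel: feed the same dictionary into the second line of \eqref{transform e-p} and integrate.

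The one genuinely geometric input --- and the only place the hyper-K\"{a}hler hypothesis enters --- is the reduction of both sums to partitions $\rho$ all of whose blocks have even cardinality. Since the holomorphic symplectic form gives $TM\cong T^{\ast}M$, the Chern roots satisfy $\{x_i\}=\{-x_i\}$ rationally, so every odd power sum $p_{2k-1}$ of the Chern roots and every odd Chern class $c_{2k-1}$ vanishes in $H^{\ast}(M;\mathbb{Q})$, equivalently $\text{ch}_{2k-1}=0$. Hence in the first identity any $\rho$ with an odd block forces a vanishing factor $\text{ch}_{|\rho_i|}$ inside $\text{Ch}_{\lambda(\rho)}[M]$, and in the second identity a vanishing factor $c_{|\rho_i|}$ inside $C_{\lambda(\rho)}[M]$; both such terms drop out, leaving exactly the stated even-block sums.

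I expect the main obstacle to be organizational rather than conceptual: the two factorial normalizations must be propagated correctly, and they swap roles between the two identities. In the first, the factor $\lambda(\rho)!$ enters through \eqref{chern charater}, when the power-sum integral $\int_M p_{\lambda(\rho)}$ is rewritten as $\lambda(\rho)!\,\text{Ch}_{\lambda(\rho)}[M]$; in the second, it enters through Example~\ref{Doubilet example} via $e(\rho)=\lambda(\rho)!\,e_{\lambda(\rho)}(\mathbf{x})$, with \eqref{chern charater} instead contributing the $(2\nu)!$ attached to $\text{Ch}_{2\nu}[M]$. Tracking which source supplies each factorial, while keeping every term homogeneous of the common degree $2n$ so that $\int_M$ acts only on the top-degree component, is the crux of the bookkeeping. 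A final cross-check against the complex four-dimensional identities $C_{(4)}=\tfrac12\text{Ch}_{(2,2)}-6\,\text{Ch}_{(4)}$ and $\text{Ch}_{(4)}=\tfrac{1}{12}C_{(2,2)}-\tfrac16 C_{(4)}$ recorded just above would confirm the signs and normalizations.
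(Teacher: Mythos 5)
Your proposal is correct and follows exactly the route the paper intends: the lemma is obtained by combining Lemma~\ref{Mobius lemma}, Example~\ref{Doubilet example}, \eqref{chern charater} and \eqref{transform e-p}, which is precisely your two-step specialization (Doubilet's abstract identities $\to$ symmetric functions $\to$ Chern roots, with the hyper-K\"{a}hler hypothesis killing the odd-block terms). One remark worth recording: carrying your bookkeeping through the second identity gives $(2\nu)!\cdot\mu(\hat{0},\pi)\cdot\text{Ch}_{2\nu}[M]=\sum_{\rho\leq\pi}\mu(\rho,\pi)\cdot\lambda(\rho)!\cdot C_{\lambda(\rho)}[M]$, i.e.\ the factor $\lambda(\rho)!$ coming from $e(\rho)=\lambda(\rho)!\,e_{\lambda(\rho)}(\mathbf{x})$ must stay \emph{inside} the sum, whereas the displayed statement omits it. The cross-check you propose against $\text{Ch}_{(4)}=\tfrac{1}{12}C_{(2,2)}-\tfrac{1}{6}C_{(4)}$ settles this: taking $\pi=\hat{1}\in\Pi_4$, so $\mu(\hat{0},\pi)=-6$, the version with $\lambda(\rho)!$ gives $\tfrac{1}{24\cdot(-6)}\bigl[24\,C_{(4)}-3\cdot4\,C_{(2,2)}\bigr]=-\tfrac{1}{6}C_{(4)}+\tfrac{1}{12}C_{(2,2)}$, which is right, while dropping $\lambda(\rho)!$ yields $-\tfrac{1}{144}C_{(4)}+\tfrac{1}{48}C_{(2,2)}$, which is not. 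So your argument is sound as written, and the factorial you flagged as the crux of the bookkeeping is exactly where the printed formula slips.
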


\section*{Acknowledgements}
The author sincerely thanks the referees for their careful reading and many helpful comments and remarks, which enhance the quality of this paper. One referee pointed out the third part in Remark \ref{SMZV remark}.
Another referee helped the author to make Section \ref{section-Doubilet formula} more readable. The author particularly appreciates both referees for pointing out a few subtle grammatical mistakes overlooked by the author for many years! This work was partially supported by the National
Natural Science Foundation of China (Grant No. 12371066).

\end{document}